\documentclass{scrartcl}       

\usepackage{graphicx}

\usepackage{amsfonts}
\usepackage{amsmath}
\usepackage{graphicx}
\usepackage{amsopn}
\usepackage{amsthm}
\usepackage{mathabx}
\usepackage{color}

\usepackage[normalem]{ulem}
\usepackage{mathtools}
\usepackage{tikz}
\usetikzlibrary{positioning,matrix}
\usepackage{calc}
\newcommand{\blockbmatrix}[3][]{%
	\begin{tikzpicture}
	[
		baseline=-\the\dimexpr\fontdimen22\textfont2\relax,
		inner sep=1pt,
		outer sep=0pt,
		every left delimiter/.style={xshift=.2ex},
		every right delimiter/.style={xshift=-.2ex}
	]
	\matrix
	[
		matrix of math nodes,
		left delimiter={[},
		right delimiter={]},
		nodes={minimum width=1em},
		execute at begin cell={\mathstrut},
		execute at empty cell={\node{0};},
		ampersand replacement=\&
	] (m)
	{
		#2\\
	};

	\foreach \i/\j in {#1}
	{
		\draw[densely dotted,semithick] (m-\i-\i.north west) rectangle (m-\j-\j.south east);
	}
	
	#3
	\end{tikzpicture}%
}
\newtheorem{example}{Example}
\newtheorem{algorithm}[example]{Algorithm}
\newtheorem{proposition}[example]{Proposition}
\newtheorem{definition}[example]{Definition}
\newtheorem{remark}[example]{Remark}
\newtheorem{lemma}[example]{Lemma}
\newtheorem{corollary}[example]{Corollary}
\newtheorem{theorem}[example]{Theorem}

\newcommand{\vertiii}[1]{{\vert\kern-0.25ex\vert\kern-0.25ex\vert #1 
    \vert\kern-0.25ex\vert\kern-0.25ex\vert}}

\begin{document}

\title{Symmetry and Motion Primitives in Model Predictive Control\thanks{Funding by Deutsche Forschungsgemeinschaft (DFG, grant no. WO 2056/4-1, 6-1) and by Mathematisches Forschungsinstitut Oberwolfach is gratefully acknowledged.} 
}

\author{Kathrin Fla{\ss}kamp\footnote{K. Fla{\ss}kamp, 
              Center for Industrial Mathematics,
              University of Bremen, Germany,
              email: kathrin.flasskamp@uni-bremen.de}
         \and
Sina Ober-Bl\"obaum\footnote{S. Ober-Bl\"obaum,
             Department of Engineering Science,
             University of Oxford, United Kingdom,
             email: sina.ober-blobaum@eng.ox.ac.uk}
		 \and
Karl Worthmann\footnote{K. Worthmann,
          Institut f\"{u}r Mathematik,
         Technische Universit\"{a}t Ilmenau, Germany,
        email: karl.worthmann@tu-ilmenau.de}
}

\date{\today}

\maketitle

\begin{abstract}
	Symmetries, e.g.\ rotational and translational invariances for the class of mechanical systems, %
	allow to characterize solution trajectories of nonlinear dynamical systems. %
	Thus, the restriction to symmetry-induced dynamics, e.g.\ by using the concept of motion primitives, may be considered as a quantization of the system. %
	Symmetry exploitation is well-established in both motion planning and control. %
	However, the linkage between the respective techniques to optimal control is not yet fully explored. %
	In this manuscript, we want to lay the foundation for the usage of symmetries in Model Predictive Control (MPC). %
	To this end, we investigate a mobile robot example in detail where our contribution is twofold: %
	Firstly, we establish asymptotic stability of a desired set point w.r.t.\ the MPC closed loop, %
	which is also demonstrated numerically by using motion primitives applied to the parallel parking scenario. %
	Secondly, if the optimization criterion is not consistent with the symmetry action, %
	we provide guidelines to rigorously derive stability guarantees based on symmetry exploitation.
\end{abstract}

\section{Introduction}

In Model Predictive Control (MPC), a sequence of Optimal Control Problems (OCPs) on finite horizons is iteratively solved %
to approximately solve an OCP on an infinite time horizon while continuously taking into account state measurements, see, e.g.\ \cite{GrunePannek2017,RawlingsMayneDiehl2017} for further details. %
It has to be guaranteed though that the resulting feedback law stabilizes the system at a desired set point %
despite the challenging task of solving OCPs for nonlinear dynamical systems in real-time, see, e.g.\ \cite{Hous2011Automatica,ZeilJone11,GiseDoan13,JereGoul14} and \cite{CairKolm18} for techniques to speed up the numerical solution of the OCPs to be solved in each MPC step.
 
The approach pursued in this paper is based on exploiting structural properties of the underlying dynamical system in MPC. %
While, in contrast to linear systems, nonlinear systems cannot be (globally) characterized by evaluating the spectrum at a desired set point, %
it is still possible to identify characteristic properties which describe global system behavior and are useful in motion planning and optimal control~\cite{Frazzoli2001,FrazBull02}. %
An important system property is the existence of symmetries, namely continuous symmetries represented by Lie groups. %
These induce invariances, i.e.\ the system dynamics are invariant w.r.t.\ the corresponding symmetry actions. %
Mechanical systems, such as cars or helicopters, for instance, are typically invariant w.r.t.\ translations or rotations. Consequently, translations or rotations of a trajectory lead to another trajectory of the mechanical system. %
Further, symmetries induce the existence of basic motions, e.g.\ going straight at constant speed or turning with constant rotational velocity in mechanical systems. %
These basic motions will be called trim primitives or trims, for short, see~\cite{FrDaFe05}. %
Trims can be represented very conveniently, even if general solutions of the dynamical systems cannot be computed by hand. %
We will quantize the nonlinear system dynamics by choosing a finite set of basic motions, which will be called motion primitives, to which the system is restricted in order to approximately solve the original OCP. 

Symmetry exploitation is well-established in control of nonlinear dynamical systems (see e.g.~\cite{BuLe04,Bloch,MarsdenRatiu}),
where the notion of symmetry is based on Noether's theorem, stating that a symmetry of a dynamical system induces a first integral, i.e. a quantity that is preserved along the system trajectory.
In \cite{Sussmann}, Sussmann introduced a definiton of symmetry for optimal control problems which allows to identify first integrals, i.e. quantities which are preserved along the state and adjoint trajectories (the so called \emph{biextremals}). This is a useful tool to solve equations of motion for dynamical systems or control problems because finding first integrals can be used to reduce the dimension of the problem which is the main motivation in the aforementioned works.

Classical planning methods only perform geometrical path planning, see e.g.\ \cite{Lav06} for an overview, and do not take the dynamics of the control system into account.
However, already Dubin showed that solutions consisting of arcs of circles and straight lines are optimal w.r.t.\ path length for system dynamics with constrained turning radius, \cite{Dubins1957}.
This has been extended by Reeds and Shepp in \cite{ReedsShepp1990} to explicit solution formulas for shortest paths of system dynamics that allow going forward and backward.
More recently, control methodologies exploiting the multi-body system dynamics~\cite{GariKobi15} or geometric mechanics~\cite{GuptKala15} were proposed.

The exploitation of symmetry-induced motion primitives for planning problems of nonlinear dynamical systems 
has been first proposed by Frazzoli et al.~\cite{Frazzoli2001,FrDaFe05}.
Moreover, in Frazzoli's approach, optimal motion plans are searched for.
Following the idea of quantization (see also \cite{FrazBull02,Ko08,FOK12}), the motion primitives are partly generated by solving optimal control problems for intermediate problems. %
Finding the best motion primitive sequence can then be written as a mixed-integer optimization problem.
Thus, heuristic approaches for globally solving the sequencing problem can be applied, such as sampling-based road-map algorithms \cite{Ko08}. 
While quantizing itself can be seen as reformulating the dynamics as a hybrid system, the approach can also be applied to systems with intrinsic hybrid, i.e.\ mixed discrete-continuous, behavior~\cite{FHO15}. Recently, the idea of motion primitives has also been applied to autonomous driving problems, see \cite{Frazzoli2016} for  motion planning and \cite{PSOB+17,OP18} for a multiobjective MPC framework. The latter work numerically exploits symmetries to reduce the computational effort for generating a library of motion primitives used within an explicit MPC framework. In contrast to this contribution, trim primitives and stability questions regarding MPC are not considered.

Besides their utility in motion planning problems, we have seen that \textit{basic motions} may help in the analysis and the design of MPC schemes as well as the numerical solution of the underlying OCPs: in \cite{Font01} and \cite{GuHu05} concatenations of such basic motions allowed the construction of stabilizing terminal regions and costs for several examples with non-stabilizable linearization (with the peculiarity that the desired set point was not contained in the interior of the terminal region). In particular, the mobile robot was used as a prototype application since its nonholonomic nature "makes the stabilization of this system challenging; see \cite{Asto96}" according to~\cite[p.\ 136]{Font01}. More recently, basic motions were also utilized in the analysis of MPC schemes without terminal costs and constraints, see, e.g.\ \cite{Wort2016CST}. %
However, a formal connection between motion primitives and the proposed techniques has not been established yet. %
The main goal of this work is to lay the foundation for a link between the already quite mature technique of motion primitives and MPC. %
To this end, we revisit the example of the mobile robot in detail in order to directly illustrate the appropriateness of the proposed methodology. %
Here, after introducing the notion of symmetries for optimal control problems based on the definitions for dynamical systems, the contribution of this work is twofold: %
Firstly, for stage cost, which are consistent with the symmetry action, we show recursive feasibility and asymptotic stability of the origin w.r.t.\ the MPC closed loop. %
The key idea is to show that the control effort is uniformly distributed, which also explains why the iterative nature of MPC leads to reduced costs in comparison to the finite horizon optimal control problems. %
Moreover, we prove finite time convergence if motion primitives are used. %
Secondly, we show that the \textit{basic motions} used in~\cite{Wort2015NMPC} were trims and we show that --~besides not using the wording~-- the characteristic properties of trims, namely that a suitable quantization of the system dynamics allows to derive sufficiently good bounds on the value function, was of key importance for the deduced results. This may, e.g., also pave the way for a verification of the distributed controllability assumption introduced in~\cite{GrueneWorthmann2010DMPC}.

The remainder of the paper is organized as follows. In Section~\ref{sec:Preliminaries}, symmetry, motion and trim primitives are introduced for dynamical control systems and illustrated for the example of the mobile robot. %
In Section~\ref{section:symmOCP}, the terminology of symmetries and invariances is extended to optimal control problems. 
Then, in Section~\ref{sec:sym_MPC}, the MPC scheme and a class of admissible control functions are introduced which guarantee the restriction to control trajectories along trim primitives. Convergence of the MPC closed loop trajectory to the origin is shown. In Section~\ref{sec:numerics} we illustrate the MPC approach with motion primitives for the mobile robot.

\section{Symmetries and Motion Primitives}\label{sec:Preliminaries}

Let the system dynamics be given by the ordinary differential equation
\begin{align}\label{NotationSystemDynamics}
		\dot{\mathbf{x}}(t) & = f(\mathbf{x}(t),\mathbf{u}(t))
\end{align}
with initial condition $\mathbf{x}(0) = \mathbf{x}^0$. Here, $\mathbf{x}(t) \in M \subseteq\mathbb{R}^n$ and $\mathbf{u}(t) \in \mathbb{R}^m$ denote the state and the control at time $t \geq 0$ respectively, where $M$ is an $n$-dimensional manifold. Let $\mathcal{T}M$ denote the tangent bundle of $M$. We assume that the map~$f: M \times \mathbb{R}^m \rightarrow \mathcal{T}M$ is continuous and locally Lipschitz w.r.t.\ its first argument in order to guarantee existence and uniqueness of the solution $\varphi_{u}(\cdot;\mathbf{x}^0)$ on its maximal interval of existence $\mathcal{I}_{\mathbf{x}^0,u}$ for $u \in \mathcal{L}^{\infty}_{\operatorname{loc}}([0,\infty),\mathbb{R}^m)$. 
$\mathcal{L}^{\infty}_{\operatorname{loc}}([0,\infty),\mathbb{R}^m)$, $m \in \mathbb{N}$, denotes the space of Lebesgue-measurable and locally absolutely integrable functions.
If we restrict the domain of the control function~$u$ to the compact interval $[0,T]$, $u|_{[0,T]} \in \mathcal{L}^{\infty}([0,T],\mathbb{R}^m)$ and the solution uniquely exists on $\mathcal{I}_{\mathbf{x}^0,u} \cap [0,T]$. 

Throughout this manuscript, we consider the following example to illustrate the definitions, concepts, and results. The robot is modeled by a kinematic model for a $4$-wheeled vehicle which autonomously moves in the 2-dimensional plane under nonholonomic constraints due to the wheels.
\begin{example}[Mobile Robot; $n=3$, $m=2$]\label{ExampleMobileRobot}
	The system dynamics of the mobile robot are governed by
\begin{align}\label{NotationMobileRobot}
	f(\mathbf{x},\mathbf{u}) & = \begin{pmatrix}
		\cos x_3 \\ \sin x_3 \\ 0
	\end{pmatrix}
	u_1 + \begin{pmatrix}
		0 \\ 0 \\ 1
	\end{pmatrix} u_2
\end{align}
where $x_1$ and $x_2$ denote the position of the robot in the plane while $x_3$ represents its orientation and thus, $M = \mathbb{R}^2 \times S^1$. Since $f$ is globally Lipschitz continuous, finite escape times can be excluded such that existence and uniqueness of the solution $\varphi_{u}(t;\mathbf{x}^0)$, $t \in [0,\infty)$, is guaranteed. 
\end{example}

For Example~\ref{ExampleMobileRobot}, a translational invariance (w.r.t.\ the position) can be observed, i.e. 
\begin{equation}\label{eq:RobotInvariance}
	\varphi_u(t;\mathbf{x}^0) + \Delta x = \varphi_u(t;\mathbf{x}^0 + \Delta \mathbf{x}) \qquad\forall\,(t,u) \in \mathbb{R}_{\geq 0} \times \mathcal{L}^{\infty}_{\operatorname{loc}}([0,\infty),\mathbb{R}^2)
\end{equation}
holds for all $\Delta \mathbf{x} = (\Delta x_1,\Delta x_2,0)^\top$ with $\Delta x_1$, $\Delta x_2 \in \mathbb{R}$. This equation states that the translation commutes with the flow, i.e.\ we may first translate the initial state~$\mathbf{x}^0$ by $\Delta \mathbf{x}$ and then compute the flow or first solve the differential equation and then apply the translation~$\Delta \mathbf{x}$. We formalize this \textit{commutativity} in the following definition. To this end, recall that a Lie group is a group $(\mathcal{G},\circ)$, which is also a smooth manifold, for which the group operations $(g,h) \mapsto g \circ h$ and $g \mapsto g^{-1}$ are smooth. If, in addition, a smooth manifold~$M$ is given, we call a map $\Psi: \mathcal{G} \times M \rightarrow M$ a left-action of $\mathcal{G}$ on $M$ if and only if the following properties hold:
	\begin{itemize}
		\item $\Psi(e,\mathbf{x}) = \mathbf{x}$ for all $\mathbf{x} \in M$ where $e$ denotes the neutral element of $(\mathcal{G},\circ)$.
		\item $\Psi(g,\Psi(h,\mathbf{x})) = \Psi(g \circ h,\mathbf{x})$ for all $g, h \in \mathcal{G}$ and $\mathbf{x} \in M$.
	\end{itemize}
	For convenience, we define $\Psi_g:M\rightarrow M$ with  $\Psi_g(x):= \Psi(g,x)$ for $g\in G$ and $x\in M$.
\begin{definition}[Symmetry Group]\label{def:symmetryGroup}
	Let $M$ be a smooth manifold, $(\mathcal{G},\circ)$ a Lie-group, and $\Psi$ a left-action of $\mathcal{G}$ on $M$. Then, we call the triple $(\mathcal{G},M,\Psi)$ a \emph{symmetry group} of the system $\dot{\mathbf{x}}(t) = f(\mathbf{x}(t),\mathbf{u}(t))$ if the property
\begin{align}\label{NotationCommutativityFlowSymmetry}
	\varphi_u(t;\Psi(g,\mathbf{x}^0)) = \Psi(g,\varphi_u(t;\mathbf{x}^0)) \qquad\forall\ (t,g,\mathbf{x}^0) \in \mathbb{R}_{\geq 0} \times \mathcal{G} \times M
\end{align}
holds for all $u \in \mathcal{L}^{\infty}_{\operatorname{loc}}([0,\infty),\mathbb{R}^m)$.
\end{definition}

Next, we compute the symmetry group of the mobile robot introduced in Example~\ref{ExampleMobileRobot} to illustrate Definition~\ref{def:symmetryGroup}. Here, a formal inspection reveals that rotational invariance is combined with translational invariance. In general, the symmetry group of mechanical systems is a subgroup of 
$SE(n) := T(n) \rtimes SO(n)$, where $T(n)$ is the group of translations and $SO(n)$ is the special orthogonal group, which can be represented by the set of matrices $\{R \in \mathbb{R}^{n \times n}: R^\top R = I \text{ and } \det(R) = 1\}$. Since this is a subgroup of the affine group of $n$~dimensions, there are two ways to represent the elements of $SE(n)$:
\begin{itemize}
	\item Either by a pair $(R,\Delta \mathbf{x})$ with $R \in SO(n)$ and $\Delta \mathbf{x} \in \mathbb{R}^n$. Then, the group action can by represented by $\mathbf{x} \mapsto R \mathbf{x} + \Delta \mathbf{x}$.
	\item Or by the single matrix $A \in \mathbb{R}^{(n+1)\times(n+1)}$ via
		\begin{align*}
			\mathbf{x} \mapsto E A \begin{pmatrix}
				\mathbf{x} \\ 1
			\end{pmatrix} \text{ with } \begin{cases}
				A = \begin{pmatrix} R^{\phantom{\top}} & \Delta \mathbf{x} \\ \mathbf{0}^\top & 1 \end{pmatrix} : R \in SO(n),\ \Delta \mathbf{x} \in \mathbb{R}^n \\
			E = \begin{pmatrix}
				I & \mathbf{0}
			\end{pmatrix} \in \mathbb{R}^{n \times (n+1)}
			\end{cases}
		\end{align*}
		where we have first rewritten $\mathbf{x}$ in homogeneous coordinates (see \cite{Murray1994}) before multiplying by $A$. The projection on the state space $\mathbb{R}^n$ can then by represented by the matrix~$E$.
\end{itemize}

In addition to the translational invariance observed in Equation~\eqref{eq:RobotInvariance}, the \emph{planar} mobile robot also has rotational symmetry as specified in the following proposition.

\begin{proposition}\label{PropositionSymmetryGroup}
	For given $u \in \mathcal{L}^{\infty}_{\operatorname{loc}}([0,\infty),\mathbb{R}^2)$, we consider the flow~$\varphi_u$ generated by the system dynamics~\eqref{NotationMobileRobot}. Then,
\begin{equation*}
	\mathcal{G} := \left\{ \blockbmatrix[1/3,4/4]{
		\cos(\Delta x_3) \& - \sin(\Delta x_3) \& 0 \& \Delta x_1 \\
		\sin(\Delta x_3) \& \phantom{-}\cos(\Delta x_3) \& 0 \& \Delta x_2 \\
		0 \& 0 \& 1 \& \Delta x_3 \\
		0 \& 0 \& 0 \& 1
	}{} : \Delta \mathbf{x} \in \mathbb{R}^2\times S^1 \right\},
\end{equation*}
is a symmetry group of the mobile robot with the matrix multiplication as group action, i.e.~$\Psi(g,\mathbf{x})=E g \begin{pmatrix}
				\mathbf{x} \\ 1
			\end{pmatrix}, g\in \mathcal{G}$, (with $\mathbf{x} \in M \subset \mathbb{R}^3$ represented in homogeneous coordinates).
\end{proposition}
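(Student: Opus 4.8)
The plan is to check, one after another, the three requirements of Definition~\ref{def:symmetryGroup}: that $(\mathcal G,\circ)$ with $\circ$ the matrix product is a Lie group, that $\Psi$ is a left-action of $\mathcal G$ on $M=\mathbb R^2\times S^1$, and that $\Psi$ satisfies the flow-commutativity~\eqref{NotationCommutativityFlowSymmetry}. Throughout I write $g(\Delta\mathbf{x})$ for the displayed $4\times4$ matrix with parameter $\Delta\mathbf{x}=(\Delta x_1,\Delta x_2,\Delta x_3)^\top\in\mathbb R^2\times S^1$, and record its block form $g(\Delta\mathbf{x})=\left(\begin{smallmatrix}\hat R & \widehat{\Delta\mathbf{x}}\\ \mathbf{0}^\top & 1\end{smallmatrix}\right)$ with $\hat R=\diag(R(\Delta x_3),1)\in SO(3)$, $R(\Delta x_3)$ the planar rotation by the angle $\Delta x_3$, and $\widehat{\Delta\mathbf{x}}=(\Delta x_1,\Delta x_2,\Delta x_3)^\top$; in particular $g(\Delta\mathbf{x})\in SE(3)$. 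For the \emph{Lie group} structure, multiplying $g(\Delta\mathbf{x})\,g(\Delta\mathbf{x}')$ produces a matrix of exactly the same shape, with rotation block $R(\Delta x_3+\Delta x_3')$ and translation block $\bigl(R(\Delta x_3)(\Delta x_1',\Delta x_2')^\top+(\Delta x_1,\Delta x_2)^\top,\ \Delta x_3+\Delta x_3'\bigr)$; the point is that the rotation angle and the third translation component remain governed by a single parameter, so the product lies in $\mathcal G$, and likewise $g(\Delta\mathbf{x})^{-1}\in\mathcal G$. Hence $\mathcal G$ is a subgroup of $SE(3)$, and it is closed in $SE(3)$ (immediate from the explicit, proper parametrization). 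By Cartan's closed-subgroup theorem $\mathcal G$ is an embedded Lie subgroup of $SE(3)$, and $\Delta\mathbf{x}\mapsto g(\Delta\mathbf{x})$ is a diffeomorphism onto $\mathcal G$.

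For the \emph{left-action} property, note that since the last row of every $g\in\mathcal G$ is $(0,0,0,1)$, the vector $g\binom{\mathbf{x}}{1}$ has last entry $1$, so $g\binom{\mathbf{x}}{1}=\binom{\Psi_g(\mathbf{x})}{1}$ where $\Psi_g(\mathbf{x})=E\,g\binom{\mathbf{x}}{1}=\hat R\mathbf{x}+\widehat{\Delta\mathbf{x}}$; its first two components range over $\mathbb R^2$ and the third is $x_3+\Delta x_3\in S^1$, so $\Psi_g$ maps $M$ into $M$. The axioms follow at once: $\Psi_{I_4}=\operatorname{id}_M$ because $E\,I_4\binom{\mathbf{x}}{1}=\mathbf{x}$, and $\Psi_g(\Psi_h(\mathbf{x}))=E\,g\binom{\Psi_h(\mathbf{x})}{1}=E\,g\,h\binom{\mathbf{x}}{1}=\Psi_{g\circ h}(\mathbf{x})$. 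In particular $\Psi_g$ is affine with constant linear part $\hat R$, so (reading it in the natural charts of $M$, i.e.\ lifting to $\mathbb R^3$) its tangent map is the constant linear map $T\Psi_g=\hat R$.

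For the \emph{flow-commutativity}, the only computation with content is the equivariance of the dynamics,
\begin{equation}\label{eq:RobotEquivariance}
	\hat R\,f(\mathbf{x},\mathbf{u})=f(\Psi_g(\mathbf{x}),\mathbf{u})\qquad\forall\,g\in\mathcal G,\ \mathbf{x}\in M,\ \mathbf{u}\in\mathbb R^2 .
\end{equation}
Applying $\hat R=\diag(R(\Delta x_3),1)$ to~\eqref{NotationMobileRobot} leaves the $u_2$-term untouched and turns $(\cos x_3,\sin x_3)^\top$ into $(\cos(x_3+\Delta x_3),\sin(x_3+\Delta x_3))^\top$ by the angle-addition formulae; since the third component of $\Psi_g(\mathbf{x})$ is exactly $x_3+\Delta x_3$, the right-hand side of~\eqref{eq:RobotEquivariance} evaluates to the same expression. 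Given~\eqref{eq:RobotEquivariance}, fix $u\in\mathcal L^{\infty}_{\operatorname{loc}}([0,\infty),\mathbb R^2)$, $g\in\mathcal G$, $\mathbf{x}^0\in M$, set $\mathbf{x}(\cdot):=\varphi_u(\cdot;\mathbf{x}^0)$ and $\mathbf{y}(\cdot):=\Psi_g(\mathbf{x}(\cdot))$. Then $\mathbf{y}(0)=\Psi_g(\mathbf{x}^0)$ and, for a.e.\ $t\ge0$, $\dot{\mathbf{y}}(t)=\hat R\dot{\mathbf{x}}(t)=\hat R f(\mathbf{x}(t),u(t))=f(\mathbf{y}(t),u(t))$ by~\eqref{eq:RobotEquivariance}, so $\mathbf{y}$ solves the mobile-robot ODE with initial value $\Psi_g(\mathbf{x}^0)$. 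Uniqueness of the (Carathéodory) solution, which holds on all of $[0,\infty)$ by Example~\ref{ExampleMobileRobot}, then yields $\Psi_g(\varphi_u(t;\mathbf{x}^0))=\mathbf{y}(t)=\varphi_u(t;\Psi_g(\mathbf{x}^0))$, i.e.~\eqref{NotationCommutativityFlowSymmetry}.

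I do not expect a genuine obstacle here. The only points requiring care are bookkeeping ones: treating the angular state coordinate $x_3$ and the parameter $\Delta x_3$ consistently modulo $2\pi$ on $S^1$, and reading the differential of $\Psi_g$ as the tangent map $T\Psi_g$ between manifolds — which, because $\Psi_g$ lifts to an affine map of $\mathbb R^3$, is simply left multiplication by the constant matrix $\hat R$. Everything substantive reduces to the angle-addition identities together with uniqueness of ODE solutions.
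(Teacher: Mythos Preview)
Your proof is correct. It differs from the paper's main proof in its route to the flow-commutativity~\eqref{NotationCommutativityFlowSymmetry}: the paper writes down the explicit integral formula for the flow $\varphi_u(t;\mathbf{x}^0)$ of the mobile robot and then checks by direct calculation (angle-sum identities) that $\Psi_g(\varphi_u(t;\mathbf{x}^0))$ and $\varphi_u(t;\Psi_g(\mathbf{x}^0))$ coincide as functions of $t$. You instead establish equivariance of the vector field, $\hat R\,f(\mathbf{x},\mathbf{u})=f(\Psi_g(\mathbf{x}),\mathbf{u})$, and then invoke uniqueness of Carath\'eodory solutions to transport this to the flow. This is exactly the ``alternative proof'' the paper records in a subsequent Remark, so you have independently rediscovered their second argument. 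Your version avoids writing out the flow explicitly and is more robust (it would work verbatim for systems whose flow has no closed form), while the paper's direct computation has the virtue of exhibiting the flow formula~\eqref{NotationFlowAnalyticSolution}, which they reuse later in Proposition~\ref{prob:RobotTrims}. You are also more careful than the paper about the preliminary bookkeeping: the paper simply asserts that $\mathcal G$ is a Lie group ``as a subgroup of $SE(2)\times S^1$'' and does not spell out the left-action axioms, whereas you verify closure under multiplication and inversion, invoke the closed-subgroup theorem (inside $SE(3)$, which is an equally valid ambient group), and check $\Psi_e=\operatorname{id}$ and $\Psi_g\circ\Psi_h=\Psi_{gh}$ explicitly.
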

\begin{proof}
	As a subgroup of $SE(2) \times S^1$, $\mathcal{G}$ is a Lie group. The flow of the mobile robot is given by
	\begin{align}
		\varphi_u(t;\mathbf{x}^0) & = \mathbf{x}^0 + \int_0^t \begin{pmatrix}
			\cos( x_0^3 + \int_0^s u_2(\tau)\,\mathrm{d}\tau ) u_1(s) \\
			\sin( x_0^3 + \int_0^s u_2(\tau)\,\mathrm{d}\tau ) u_1(s) \\
			u_2(s)
		\end{pmatrix}\,\mathrm{d}s.
	\end{align}
	Then, direct calculations, using the angle sum formula for sine and cosine, show the Identity~\eqref{NotationCommutativityFlowSymmetry} since both terms can be written as
	\begin{align}\label{NotationFlowAnalyticSolution}
			R_{\Delta x_3} \mathbf{x}^0 + \Delta \mathbf{x} + \int_0^t \begin{pmatrix}
				\cos(x_3^0 + \int_0^s u_2(\tau)\,\mathrm{d}\tau + \Delta x_3)  u_1(s) \\
				\sin(x_3^0 + \int_0^s u_2(\tau)\,\mathrm{d}\tau + \Delta x_3)  u_1(s) \\
				u_2(s)
			\end{pmatrix}\,\mathrm{d}s
	\end{align}
	where the rotation matrix is denoted by 
	$R_{\Delta x_3}$. 
\end{proof}

If Property~\eqref{NotationCommutativityFlowSymmetry} holds, the flow of the system is said to be equivariant w.r.t.\ the symmetry action $\Psi$.
As a consequence, given a trajectory $\varphi_u(\cdot,\mathbf{x}^0)$, new trajectories $\Psi(g,\varphi_u(\cdot,\mathbf{x}^0))$, $g \in \mathcal{G}$, can be generated using~$\Psi$. This family of trajectories --~parametrized in the group element~$g$~-- forms an equivalence class.
\begin{definition}[Motion Primitive]\label{DefinitionMotionPrimitive}
	Let $(\mathcal{G},M,\Psi)$ be a symmetry group in the sense of Definition~\ref{def:symmetryGroup}. Then, two trajectories $\varphi_u(\cdot;\mathbf{x}^0)$ and $\varphi_{u}(\cdot;\bar{\mathbf{x}}^0)$ are called \emph{equivalent}, if there exists $g \in \mathcal{G}$ such that
	\[
		\varphi_u(t;\mathbf{x}^0) = \Psi(g,\varphi_{u}(t;\bar{\mathbf{x}}^0)) \qquad\forall\,t \geq 0.
	\]
	A \emph{motion primitive} is the equivalence class of all trajectories equivalent to  $\varphi_u(\cdot;\mathbf{x}^0)$ w.r.t.\ the left action~$\Psi$.
\end{definition}

Note that $u$ is an arbitrary but fixed control function in Definition~\ref{DefinitionMotionPrimitive}, i.e.\ $u$ is identical for all members of the same  
motion primitive. By slight abuse of notation, we will use the term motion primitive also for a representative of the equivalence class. 
The symmetry action of the mobile robot is illustrated in Figure~\ref{FigureMobileRobotAffineTransformations}.\\

\begin{figure}
	\includegraphics[width=0.85\textwidth]{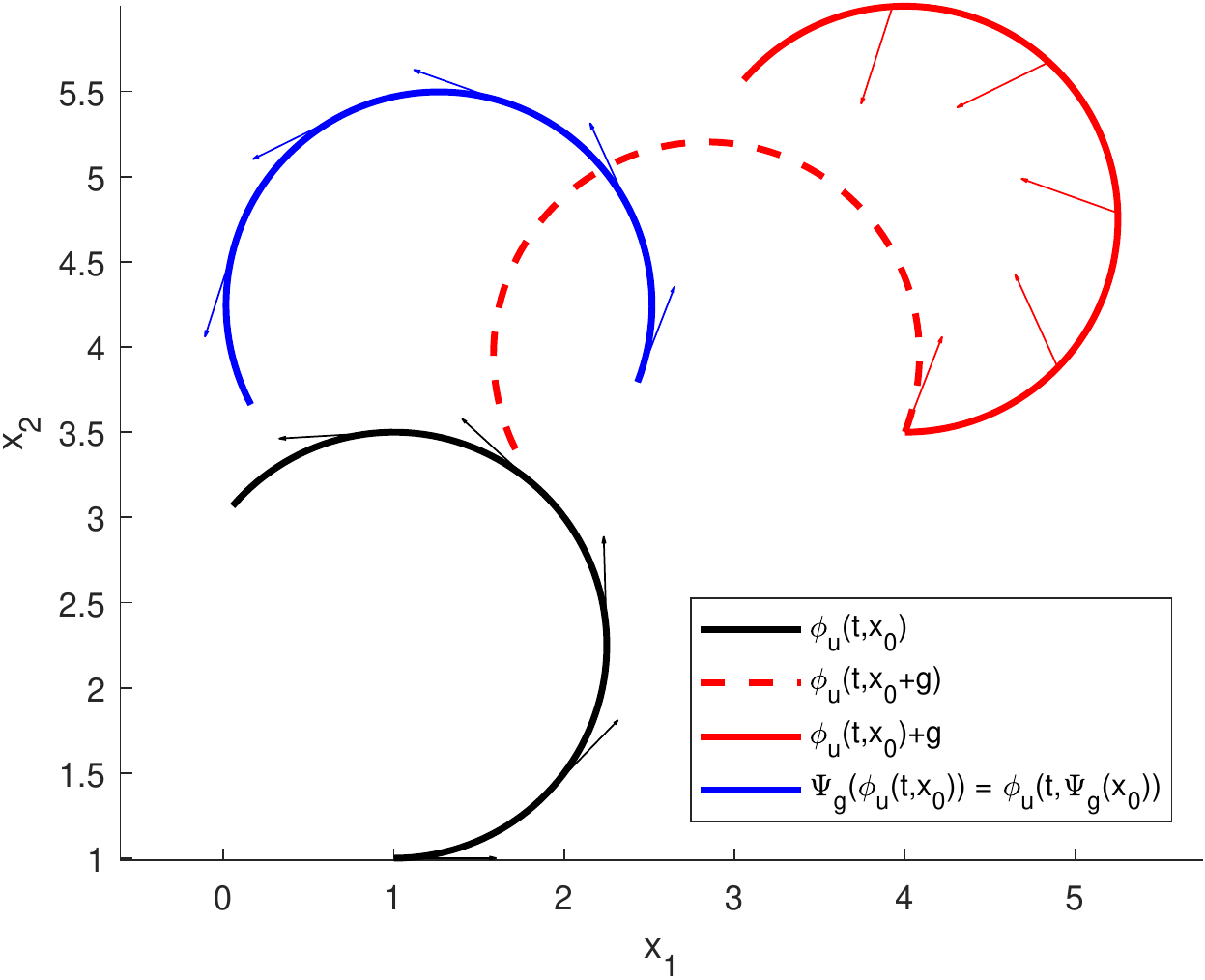}
	\caption{For $u \equiv (u_1\ u_2)^\top$, $u_2 \neq 0$, the projection of the trajectory $\bigcup_{t \in [0,\delta]} \varphi_{u}(t;\mathbf{x}^0)$ is a segment of a circle. Small arrows indicate the current values of angle $x_3$. To illustrate the symmetry shift, consider $\Delta \mathbf{x} = (3,\, 2.5,\, 1.2)^\top$ and the black curve as a starting point.
	Naively, one may think that the symmetry shift simply equals an addition of $g$ to the state. The two red curves illustrate that this is wrong: the dashed red curve is generated by the flow starting at $\mathbf{x}^0 + \Delta \mathbf{x}$ and it does not overlap with the solid red curve, which is a point-wise translation of the black curve by $g$. In fact, the solid red curve cannot even be a system trajectory, since the heading angle $x_3$ of the robot is not tangential to its motion.
	However, the symmetry action of Proposition~\ref {PropositionSymmetryGroup} can be successfully validated: the blue curve is both, the flow of $\varphi_{u}(\cdot,\Psi_g(\mathbf{x}^0))$ as well as the symmetry action applied point-wise to the original (black) solution.}
\label{FigureMobileRobotAffineTransformations}
\end{figure}

Considering constant control functions $\bar{u}$, we may link trajectories $\varphi_{\bar{u}}(t;\mathbf{x}^0)$, $t \geq 0$, to (the positive part of) a corresponding one-parameter subgroup $g_t: \mathbb{R} \to \mathcal{G}$. %
Trajectories $\varphi_{\bar{u}}(\cdot;\mathbf{x}^0)$, which exhibit this link are called trim primitives, trims in short, and can be considered as basic motions. To be more precise, a particular element~$\xi$ of the Lie algebra~$\mathfrak{g}$ is scaled by time $t$ and then mapped via the exponential function to the Lie group~$\mathcal{G}$ to generate the corresponding one-parameter subgroup. We show this connection for the symmetry group of the mobile robot in combination with constant control functions.

\begin{definition}[Trim Primitive] \label{def:Trims}
Let $(\mathcal{G},M,\Psi)$ be a symmetry group in the sense of 
	Definition~\ref{def:symmetryGroup}.
	Then, a trajectory $\varphi_u(\cdot;\mathbf{x}^0)$ is called a \emph{trim primitive} if there exists a Lie algebra element $\xi \in\mathfrak{g}$ such that
	\[ 
		\varphi_u(t;\mathbf{x}^0) = \Psi(\exp(\xi t),\mathbf{x}^0) \quad\text{ and }\quad u(t) \equiv \bar{u} = \text{const. } \quad  \forall t \geq 0.\]
\end{definition}

\begin{proposition}\label{prob:RobotTrims}
	Let $(\mathcal{G},M,\Psi)$ be the symmetry group of Proposition~\ref{PropositionSymmetryGroup} and $\mathfrak{g}$ denote the corresponding Lie algebra. Then, for the constant control function $u:[0,\delta] \rightarrow \mathbb{R}^2$, $\delta > 0$, defined by $u(t) = (u_1\ u_2)^\top$ for all $t \in [0,\delta]$ and the initial value~$\mathbf{x}^0$, we get
	\begin{align}\label{NotationTrimComputation}
		\varphi_u(t;\mathbf{x}^0) = \Psi_{g_t}(\mathbf{x}^0) := \Psi(g_t,\mathbf{x}^0)
	\end{align}
	for $g_t = 	\exp(\xi t)$ with $\xi = (v_1\ v_2\ u_2)^\top \in \mathfrak{g}$ defined by
		\begin{align*}
		v_1 & := u_1 \cos(x^0_3) + u_2 x_2^0, \\
		v_2 & := u_1 \sin(x^0_3) - u_2 x_1^0.
	\end{align*}
	In particular, $\Psi_{g_t}(\mathbf{x}^0) = R_{u_2t} \mathbf{x}^0 + \mathbf{b}_{g_t}$ with the translation vector
	\begin{align*}
		\mathbf{b}_{g_t} := \begin{pmatrix}
			u_2^{-1} \left(v_1 \sin(u_2 t) - v_2 (1-\cos(u_2 t)) \right) \\
			u_2^{-1} \left(v_1 (1-\cos(u_2 t)) + v_2 \sin(u_2 t) \right) \\
			u_2 t
		\end{pmatrix} \quad\text{ and }\quad \mathbf{b}_{g_t} := \begin{pmatrix}
			v_1t \\ v_2t \\ 0
		\end{pmatrix}
	\end{align*}
	for $u_2 \neq 0$ and $u_2 = 0$, respectively.
\end{proposition}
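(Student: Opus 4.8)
The plan is to prove~\eqref{NotationTrimComputation} by identifying the one-parameter subgroup $g_t = \exp(\xi t)$ explicitly and then appealing to uniqueness of solutions of an initial value problem. First I would fix the identification of $\mathfrak{g}$ with $\mathbb{R}^3$: differentiating the defining matrices of $\mathcal{G}$ at $\Delta\mathbf{x} = 0$ shows that $\xi = (v_1\ v_2\ u_2)^\top$ corresponds to
\[
\hat{\xi} = \begin{pmatrix} B & c \\ \mathbf{0}^\top & 0 \end{pmatrix}, \qquad B := \begin{pmatrix} 0 & -u_2 & 0 \\ u_2 & 0 & 0 \\ 0 & 0 & 0 \end{pmatrix}, \quad c := \begin{pmatrix} v_1 \\ v_2 \\ u_2 \end{pmatrix},
\]
so that $\exp(\xi t)$ is to be read as the matrix exponential of $\hat{\xi} t$.

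The first, routine part computes $g_t$ in closed form. Since $\hat{\xi}$ is block upper triangular with vanishing lower-right block, $\exp(\hat{\xi} t) = \left(\begin{smallmatrix} \exp(Bt) & (\int_0^t \exp(Bs)\,\mathrm{d}s)\,c \\ \mathbf{0}^\top & 1 \end{smallmatrix}\right)$; evaluating $\exp(Bt) = R_{u_2 t}$ and the integral --~the case split $u_2 \neq 0$ vs.\ $u_2 = 0$ arising from $\int_0^t \cos(u_2 s)\,\mathrm{d}s$ and $\int_0^t \sin(u_2 s)\,\mathrm{d}s$~-- reproduces exactly the translation vector $\mathbf{b}_{g_t}$ of the statement. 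In particular $g_t \in \mathcal{G}$, and by the definition of $\Psi$ one obtains $\Psi_{g_t}(\mathbf{x}^0) = E g_t \binom{\mathbf{x}^0}{1} = R_{u_2 t}\mathbf{x}^0 + \mathbf{b}_{g_t}$, which is the last assertion of the proposition.

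It remains to match $\Psi_{g_t}(\mathbf{x}^0)$ with the flow $\varphi_u(\cdot;\mathbf{x}^0)$. I would use that the infinitesimal generator of the action in direction $\xi$ is $X_\xi(\mathbf{z}) := \tfrac{\mathrm{d}}{\mathrm{d}t}\big|_{t=0}\Psi(\exp(\hat{\xi} t),\mathbf{z}) = E\hat{\xi}\binom{\mathbf{z}}{1} = B\mathbf{z} + c = (-u_2 z_2 + v_1,\ u_2 z_1 + v_2,\ u_2)^\top$, and that --~as for any orbit of a one-parameter subgroup, using $\exp(\hat{\xi}(t+s)) = \exp(\hat{\xi} s)\exp(\hat{\xi} t)$~-- the curve $t \mapsto \Psi_{g_t}(\mathbf{x}^0)$ solves $\dot{\mathbf{z}} = X_\xi(\mathbf{z})$, $\mathbf{z}(0) = \mathbf{x}^0$. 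Comparing $X_\xi$ with $f(\cdot,\bar{u})$ from~\eqref{NotationMobileRobot}, the two vector fields agree at a point $\mathbf{z}$ exactly when $u_1\cos z_3 + u_2 z_2 = v_1$ and $u_1\sin z_3 - u_2 z_1 = v_2$; this is precisely what fixes the constants $v_1, v_2$, namely so that equality holds at $\mathbf{z} = \mathbf{x}^0$.

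The single non-mechanical step --~and the one I expect to be the crux~-- is to upgrade this pointwise match to a match along the whole trajectory, i.e.\ to show that $u_1\cos x_3(t) + u_2 x_2(t)$ and $u_1\sin x_3(t) - u_2 x_1(t)$ are \emph{conserved} along $\varphi_{\bar{u}}(\cdot;\mathbf{x}^0)$. Substituting $x_3(t) = x_3^0 + u_2 t$ and the closed-form expressions for $x_1(t), x_2(t)$ from the flow formula in the proof of Proposition~\ref{PropositionSymmetryGroup} makes this a one-line check in both cases $u_2 \neq 0$, $u_2 = 0$. It then follows that $f(\varphi_{\bar{u}}(t;\mathbf{x}^0),\bar{u}) = X_\xi(\varphi_{\bar{u}}(t;\mathbf{x}^0))$ for all $t \geq 0$, so $\varphi_{\bar{u}}(\cdot;\mathbf{x}^0)$ and $\Psi_{g_t}(\mathbf{x}^0)$ both solve $\dot{\mathbf{z}} = X_\xi(\mathbf{z})$ with initial value $\mathbf{x}^0$; uniqueness (from the Lipschitz assumption on $f$) yields~\eqref{NotationTrimComputation}. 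A fully computational alternative, closer in spirit to the proof of Proposition~\ref{PropositionSymmetryGroup}, is also available: plug $\bar{u}$ into the flow formula, evaluate the three integrals, and match the result with $R_{u_2 t}\mathbf{x}^0 + \mathbf{b}_{g_t}$ term by term via the angle-sum identities.
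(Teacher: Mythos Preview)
Your proposal is correct. The first half---identifying $\hat{\xi}$ and computing $\exp(\hat{\xi}t)$ via the block structure to obtain $R_{u_2 t}\mathbf{x}^0 + \mathbf{b}_{g_t}$---matches the paper (which simply writes out the resulting $4\times 4$ matrix). The two diverge in the second half: the paper takes exactly the ``fully computational alternative'' you mention at the end, namely evaluating the integrals in the flow formula~\eqref{NotationFlowAnalyticSolution} for constant $\bar u$ and matching both sides term by term with the angle-sum identities. Your primary route is instead structural: pick $v_1,v_2$ so that the infinitesimal generator $X_\xi$ agrees with $f(\cdot,\bar u)$ at $\mathbf{x}^0$, show the agreement persists along the flow, and invoke ODE uniqueness. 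This is arguably cleaner---it \emph{derives} the formulas for $v_1,v_2$ rather than verifying them after the fact---and it generalizes to situations where the flow is not explicitly available. One small simplification: the conservation step does not need the closed-form flow at all. Differentiating $u_1\cos x_3 + u_2 x_2$ and $u_1\sin x_3 - u_2 x_1$ directly along~\eqref{NotationMobileRobot} gives zero (the cross terms cancel), which keeps the argument self-contained and removes the case split $u_2 \neq 0$ vs.\ $u_2 = 0$ from that part of the proof.
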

\begin{proof}
	The Lie algebra for a Lie group that consists of rotation matrices is given by skew symmetric matrices \cite{Murray1994}. The corresponding Lie algebra is one-dimensional and can be represented as
	\begin{align*}
		\begin{pmatrix}
			0 & -u_2 & 0 \\
			u_2 & 0 & 0  \\
			0 & 0 & 0
		\end{pmatrix}.
	\end{align*}
The Lie algebra that corresponds to translations in $\mathbb{R}^n$ and $S^1$ is simply $\mathbb{R}^n$ and $\mathbb{R}$, respectively \cite{Murray1994}. Together, we obtain that every element~$\xi$ of the Lie algebra can be represented by the 
	matrix
	\begin{align*}
		\begin{pmatrix}
			0 & -u_2 & 0 & v_1 \\
			u_2 & 0 & 0 & v_2 \\
			0 & 0 & 0 & u_2 \\
			0 & 0 & 0 & 0
		\end{pmatrix}
	\end{align*}
	using the triple $(v_1\ v_2\ u_2)^\top$. Then, since we are using the homogeneous representation, the linear scaling $t \xi$ by a factor $t \in \mathbb{R}$ and the exponential $\exp: \mathfrak{g} \rightarrow \mathcal{G}$ can by directly calculated by first scaling the representation matrix and, then, computing the matrix exponential ($u$ is constant). Doing so yields
	\begin{align*}
		\exp(\xi t) = \begin{pmatrix}
			\cos(u_2 t) & -\sin(u_2 t) & 0 & \frac 1 {u_2} (v_1 \sin(u_2 t) - v_2 (1-\cos(u_2 t))) \\
			\sin(u_2 t) & \phantom{-}\cos(u_2 t) & 0 & \frac 1 {u_2} (v_1 (1-\cos(u_2 t) - v_2 \sin(u_2 t))) \\
			0 & 0 & 1 & u_2 t \\
			0 & 0 & 0 & 1
		\end{pmatrix},
	\end{align*}
	which shows the claimed representation of $\Psi_{g_t}(\mathbf{x}^0)$. Moreover, the left hand side of Equation~\eqref{NotationTrimComputation} equals
	\begin{align*}
			\mathbf{x}^0 + \begin{pmatrix}
				\frac {u_1}{u_2} (\sin(x_3^0) - \sin(x_3^0 + t u_2)) \\
				\frac {u_1}{u_2} (\cos(x_3^0 + t u_2) - \cos(x_3^0)) \\
				t u_2(s)
			\end{pmatrix}
	\end{align*}
	by solving the integrals in the representation~\eqref{NotationFlowAnalyticSolution}. Then, direct calculations using the angle sum formula for sine and cosine show
Equation~\ref{NotationTrimComputation}. 
\end{proof}

Proposition~\ref{prob:RobotTrims} shows that the mobile robot exhibits trims according to Definition~\ref{def:Trims} for all constant control functions and arbitrary initial values. If the rotational speed~$u_2(t)$ is constant and not equal to zero, trims result in a circular motion (otherwise it is a straight line), see e.g.\ the black curve in Figure~\ref{FigureMobileRobotAffineTransformations}.
In general, trims are attractive since particular solutions of a nonlinear system can be computed while no analytic expression for the general solution is available.

\begin{remark}[Dynamical Systems \& Trims] 
	In dynamical systems without control input, motions generated by symmetry actions are called \emph{relative equilibria} since these motions have to be constant in coordinates which are non-symmetric \cite{MarsdenRatiu,Bloch}. Constructive approaches to find trims can be deduced from symmetry reduction methods.
For mechanical systems with cyclic coordinates, this has been worked out in \cite{FOK12}.\\
Note that any solution with constant control generates a trim for Example~\ref{ExampleMobileRobot} since it is a kinematic model (and, thus, the velocities are directly controlled). If actuator dynamics are added, it becomes a second order mechanical system. For this system class, trims typically correspond to motions with constant velocities in body-fixed frame. 
\end{remark}

\begin{remark}[Alternative Characterization of Symmetry Groups]
	Let $f(\cdot,\mathbf{u})$ maps from a smooth manifold~$M$ to the tangential bundle $\mathcal{T}M$ 
	of $M$. The symmetry action as a map $\Psi: \mathcal{G} \times M \rightarrow M$ can be lifted to $\mathcal{T}_\mathbf{x}M$ for $\mathbf{x}\in M$ 
	via
	\begin{align*}
		\Psi^{\mathcal{T}_\mathbf{x}M}: \mathcal{G} \times \mathcal{T}_xM \rightarrow \mathcal{T}_\mathbf{x}M,\quad \Psi^{\mathcal{T}_\mathbf{x}M}\left(g, \mathbf{v} \right) = \frac {\mathrm{d} \Psi_g}{\mathrm{d}\mathbf{x}} (\mathbf{x}) \cdot \mathbf{v}.
	\end{align*}
	Then, the vector field is said to be equivariant w.r.t.\ the symmetry action~$\Psi$ if $f(\Psi_g(\mathbf{x}),\mathbf{u})   = \Psi_g^{\mathcal{T}_xM} (f(\mathbf{x},\mathbf{u}))\ \forall \mathbf{x}\in M$. A direct calculation shows the equivalence of this condition to Property~\eqref{NotationCommutativityFlowSymmetry}: Application of Equation~\eqref{NotationCommutativityFlowSymmetry} and its time derivative yields
\begin{align*}
		f(\Psi_g(\mathbf{x}),\mathbf{u}) & {=}  f(\Psi_g(\varphi_u(t;\mathbf{x^0})) ,\mathbf{u})\stackrel{\eqref{NotationCommutativityFlowSymmetry}}{=}f(\varphi_u (t;\Psi_g(\mathbf{x^0})),\mathbf{u})= \frac {\mathrm{d}\varphi_u}{\mathrm{d}t} (t;\Psi_g(\mathbf{x^0})) \\ &\stackrel{\eqref{NotationCommutativityFlowSymmetry}}{=} \frac {\mathrm{d}\Psi_g}{\mathrm{d}x} (\varphi_u(t;\mathbf{x^0})) \frac {\mathrm{d}\varphi_u}{\mathrm{d}t} (t;\mathbf{x^0})  = \Psi_g^{\mathcal{T}_xM}(f(\mathbf{x},\mathbf{u})).
	\end{align*}
\end{remark}

\begin{remark}[Alternative proof of Proposition~\ref{PropositionSymmetryGroup}]
Instead of showing the invariance of the flow $\varphi_u$ (Property \eqref{NotationCommutativityFlowSymmetry}) we can, alternatively, show the equivariance of the vector field $f$ to prove Proposition~\ref{PropositionSymmetryGroup}. With $\Psi_g(\mathbf{x})= R \mathbf{x} + \Delta \mathbf{x}$ the lifted action $\Psi_g^{\mathcal{T}_xM}$ is given by $\frac {\mathrm{d} \Psi_g}{\mathrm{d}\mathbf{x}} (\mathbf{x}) = R$ and thus, it follows with the vector field $f$ given in \eqref{NotationMobileRobot}
\begin{align*}
	f(\Psi_g(\mathbf{x}),\mathbf{u}) & = \begin{pmatrix}
		\cos (x_3 + \Delta x_3) \\ \sin (x_3 + \Delta x_3) \\ 0
	\end{pmatrix}
	u_1 + \begin{pmatrix}
		0 \\ 0 \\ 1
	\end{pmatrix} u_2 \\
	&= \begin{pmatrix}
		\cos (\Delta x_3) & -\sin (\Delta x_3) & 0\\ \sin (\Delta x_3)&\cos (\Delta x_3) & 0 \\ 0& 0& 1
	\end{pmatrix} \begin{pmatrix}
		\cos (x_3) u_1 \\ \sin (x_3 ) u_1 \\ u_2 \end{pmatrix} = 
		   \Psi_g^{\mathcal{T}_xM}(f(\mathbf{x},\mathbf{u})).
\end{align*}
\end{remark}

\section{Symmetry and Optimal Control}\label{section:symmOCP}

For given optimization horizon~$T$, $T \in \mathbb{R}_{>0}$, we consider the Optimal Control Problem (OCP)
\begin{equation}
	\boxed{\begin{aligned}
		\text{Minimize} \quad & \int_0^T \ell (\mathbf{x}(t),\mathbf{u}(t))\,\mathrm{d}t \quad\text{w.r.t.\ } u \in \mathcal{L}^{\infty}([0,T],\mathbb{R}^m), x \in \mathcal{AC}([0,T],M)\\
		\text{subject to} \quad & \ r(\mathbf{x}(0),\mathbf{x}(T)) = 0 & \hspace*{-5cm} \text{(boundary condition)}\\
		&\ g(\mathbf{x}(t),\mathbf{u}(t)) \leq 0,\ t \in [0,T], & \hspace*{-5cm} \text{(state \& control constraint)} \\
		&\ \dot{\mathbf{x}}(t) = f(\mathbf{x}(t),\mathbf{u}(t)),\ t \in [0,T] & \hspace*{-5cm} \text{(system dynamics)}
	\end{aligned}}%
	\tag{OCP}\label{OCP}
\end{equation}
with continuous stage cost $\ell: \mathbb{R}^n \times \mathbb{R}^m \to \mathbb{R}$ and functions $r:\mathbb{R}^n \times \mathbb{R}^n \to \mathbb{R}^{2n}$ and $g:\mathbb{R}^n \times \mathbb{R}^m \to \mathbb{R}^q$, $q \in \mathbb{N}_0$.
 Moreover, $\mathcal{AC}([0,T],M)$, is the space of absolutely continuous functions on the manifold $M$. 

Recall that we identified symmetries of the system dynamics in Section~\ref{sec:Preliminaries}. %
Now, our aim is to take advantage of these symmetries in optimal control. %
Therefore, we are interested in functions~$\ell$, $r$, and $g$ that share the invariance properties w.r.t.\ a symmetry of the system dynamics, see Subsection~\ref{SubsectionConsistentOCP}. Then, in Subsection~\ref{SubsectionInconsistentOCP}, we are concerned with limitations and possible remedies, which might occur if a constraint function or the stage cost do not share the invariance. Finally, in Subsection~\ref{SubsectionMobileRobotOCP}, we formulate the OCP for the example of the mobile robot. Here, we define a set of admissible control functions and show existence of an optimal control.

\subsection{OCPs Consistent with the Invariance of the System Dynamics}\label{SubsectionConsistentOCP}

In the following definition, we precisely state what we mean by saying that the constraints and the stage cost share the invariance of the system dynamics.
\begin{definition}\label{DefinitionInvariantCostsConstraints}
	Consider the system dynamics~\eqref{NotationSystemDynamics}, i.e.\ $\dot{\mathbf{x}}(t) = f(\mathbf{x}(t),\mathbf{u}(t))$, with symmetry group $(\mathcal{G},M,\Psi)$. %
	We call a function \emph{invariant w.r.t.\ the symmetry}, if every state can be replaced by its image under the symmetry action~$\Psi_g$ without changing its value - independent of the particular choice of~$g \in \mathcal{G}$ and $\mathbf{u} \in \mathbb{R}^m$, i.e.
	\begin{align*}
		\ell(\Psi_{g}(\mathbf{x}),\mathbf{u}) & = \ell(\mathbf{x},\mathbf{u}) \qquad \forall\, (g,\mathbf{x},\mathbf{u}) \in \mathcal{G} \times M \times \mathbb{R}^m, \\
		g(\Psi_{g}(\mathbf{x}),\mathbf{u}) & = g(\mathbf{x},\mathbf{u}) \qquad \forall\, (g,\mathbf{x},\mathbf{u}) \in \mathcal{G} \times M \times \mathbb{R}^m, \\
		r(\Psi_{g}(\mathbf{x}),\Psi_g({\bar{\mathbf{x}}})) & = r(\mathbf{x},\bar{\mathbf{x}}) \qquad \forall\, (g,\mathbf{x},\bar{\mathbf{x}}) \in \mathcal{G} \times M \times M.
	\end{align*}
\end{definition}

If the stage cost~$\ell$ and the constraint functions $r,g$ are invariant, all equivalent trajectories, i.e.\ each motion primitive, have the same costs and remain feasible, which directly follows from Definition~\ref{DefinitionInvariantCostsConstraints}. Then, we call~\eqref{OCP} consistent (with the invariance property of the system dynamics), which is motivated by the following proposition. As a direct consequence, also optimality is preserved if \eqref{OCP} is consistent. %
\begin{proposition}\label{PropositionInvarianceOCP}
	Consider \eqref{OCP} and let $(\mathcal{G},M,\Psi)$ be a symmetry group of the system dynamics. Furthermore, let $u \in \mathcal{L}^{\infty}([0,T],\mathbb{R}^m)$ and $x \in \mathcal{AC}([0,T],M)$ satisfy the constraints. Then, if the stage cost~$\ell$ and the constraint functions $r,g$ are invariant, all pairs $(u,\Psi_g(x(\cdot)))$, $g \in \mathcal{G}$, also satisfy the constraints and yield the same costs, i.e.
	\begin{align*}
		\int_{0}^{T} \ell(\mathbf{x}(t),\mathbf{u}(t)) \, \mathrm{d}t = \int_{0}^{T} \ell(\Psi_g(\mathbf{x}(t)),\mathbf{u}(t)) \, \mathrm{d}t.
	\end{align*}
\end{proposition}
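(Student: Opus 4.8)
The plan is to observe that applying the symmetry action $\Psi_g$ pointwise to an admissible pair $(u,x(\cdot))$ produces another admissible pair with the \emph{same} control, and then to read off constraint preservation and the cost identity directly from the invariance properties collected in Definition~\ref{DefinitionInvariantCostsConstraints}.

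First I would argue that $(u,\Psi_g(x(\cdot)))$ again satisfies the system dynamics. Writing $\mathbf{x}^0 := \mathbf{x}(0)$ and using that $x \in \mathcal{AC}([0,T],M)$ solves $\dot{\mathbf{x}}(t) = f(\mathbf{x}(t),\mathbf{u}(t))$, uniqueness of solutions gives $\mathbf{x}(t) = \varphi_u(t;\mathbf{x}^0)$ on $[0,T]$. Since $(\mathcal{G},M,\Psi)$ is a symmetry group, Property~\eqref{NotationCommutativityFlowSymmetry} then yields
\begin{align*}
	\Psi_g(\mathbf{x}(t)) = \Psi_g(\varphi_u(t;\mathbf{x}^0)) = \varphi_u(t;\Psi_g(\mathbf{x}^0)), \qquad t \in [0,T].
\end{align*}
Because $\Psi_g:M\to M$ is smooth, $\Psi_g\circ x \in \mathcal{AC}([0,T],M)$, and the identity above shows that it is the trajectory generated by $u$ from the initial value $\Psi_g(\mathbf{x}^0)$; in particular it is a genuine admissible state curve. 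Equivalently, one could differentiate $t\mapsto\Psi_g(\mathbf{x}(t))$ and invoke the equivariance $f(\Psi_g(\mathbf{x}),\mathbf{u}) = \Psi_g^{\mathcal{T}_\mathbf{x}M}(f(\mathbf{x},\mathbf{u}))$ of the vector field, thereby avoiding the uniqueness argument.

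Given this, the rest is immediate. For the boundary condition, invariance of $r$ gives $r(\Psi_g(\mathbf{x}(0)),\Psi_g(\mathbf{x}(T))) = r(\mathbf{x}(0),\mathbf{x}(T)) = 0$; for the mixed state--control constraint, invariance of the constraint function gives, for each $t\in[0,T]$, $g(\Psi_g(\mathbf{x}(t)),\mathbf{u}(t)) = g(\mathbf{x}(t),\mathbf{u}(t)) \le 0$ (here the inner symbol denotes the fixed group element and the outer one the constraint map); hence $(u,\Psi_g(x(\cdot)))$ is feasible for~\eqref{OCP}. Finally, invariance of $\ell$ applied pointwise yields $\ell(\Psi_g(\mathbf{x}(t)),\mathbf{u}(t)) = \ell(\mathbf{x}(t),\mathbf{u}(t))$ for all $t\in[0,T]$, so integrating over $[0,T]$ gives the asserted cost identity, and preservation of optimality follows as a corollary. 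The only step requiring real care is the first one -- verifying that $\Psi_g\circ x$ is truly an admissible trajectory -- which is precisely where the equivariance of the flow (Definition~\ref{def:symmetryGroup}) enters; everything afterwards is a one-line consequence of Definition~\ref{DefinitionInvariantCostsConstraints}.
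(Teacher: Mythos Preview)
Your proof is correct. The paper does not give an explicit proof of this proposition; it simply remarks (just before the statement) that feasibility and equal costs ``directly follow from Definition~\ref{DefinitionInvariantCostsConstraints}''. Your write-up fills in exactly the details one would expect, and in particular makes explicit the one nontrivial point the paper glosses over: that $\Psi_g\circ x$ is again an admissible trajectory for the same control~$u$, which you obtain from the flow equivariance~\eqref{NotationCommutativityFlowSymmetry} (or equivalently from the vector-field equivariance). Everything else is, as you note, a pointwise application of the invariance identities in Definition~\ref{DefinitionInvariantCostsConstraints} followed by integration, which is precisely what the paper has in mind.
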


Next, we show that the cost function is particularly simple to evaluate along trim primitives, which also explains why $\ell(\mathbf{x}(0),\bar{u})$ is sometimes called \textit{unit cost} of a trim, see \cite{FrDaFe05}.
\begin{proposition}
	Consider a continuous stage cost $\ell: \mathbb{R}^n \times \mathbb{R}^m \to \mathbb{R}$ and let $(\mathcal{G},M,\Psi)$ be a symmetry group of the system dynamics~\eqref{NotationSystemDynamics}.
Further, let a constant control function $u \equiv \bar{\mathbf{u}}$ and a corresponding element $\xi \in \mathfrak{g}$ of the generating Lie algebra $\mathfrak{g}$ be given.
	Then, for given $T>0$, for each
	state trajectory~$x \in \mathcal{AC}([0,T],M)$ such that $\mathbf{x}(t):= \varphi_u(t;\mathbf{x}(0)) = \Psi_{\exp(\xi t)}(\mathbf{x}(0))$ holds, i.e.\ for each trim primitive, the invariant stage cost~$\ell$ satisfies
	\[
		\int_0^T \ell(\mathbf{x}(t),\mathbf{u}(t))\,\mathrm{d}t = T \cdot \ell(\mathbf{x}(0),\bar{\mathbf{u}}).
	\]
\end{proposition}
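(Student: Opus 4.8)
The plan is to reduce the claim to the observation that, along a trim, the integrand $t \mapsto \ell(\mathbf{x}(t),\mathbf{u}(t))$ is in fact \emph{constant}, so that the integral collapses to $T$ times its value at $t=0$. First I would record the two structural facts that are available: by hypothesis the control is constant, $\mathbf{u}(t) \equiv \bar{\mathbf{u}}$, and the state trajectory is a trim, i.e.\ $\mathbf{x}(t) = \varphi_u(t;\mathbf{x}(0)) = \Psi_{\exp(\xi t)}(\mathbf{x}(0))$ for all $t \in [0,T]$, where $\exp(\xi t) \in \mathcal{G}$ for every $t$ since $\exp \colon \mathfrak{g} \to \mathcal{G}$. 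Thus at each time $t$ the state $\mathbf{x}(t)$ is obtained from the fixed point $\mathbf{x}(0)$ by applying the symmetry action with group element $g_t := \exp(\xi t)$.

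Next I would invoke the invariance of the stage cost from Definition~\ref{DefinitionInvariantCostsConstraints}: for every $g \in \mathcal{G}$, every $\mathbf{x} \in M$ and every $\mathbf{u} \in \mathbb{R}^m$ one has $\ell(\Psi_g(\mathbf{x}),\mathbf{u}) = \ell(\mathbf{x},\mathbf{u})$. Applying this with $g = g_t = \exp(\xi t)$, $\mathbf{x} = \mathbf{x}(0)$, and $\mathbf{u} = \bar{\mathbf{u}}$ gives, for every $t \in [0,T]$,
\[
	\ell(\mathbf{x}(t),\mathbf{u}(t)) = \ell\big(\Psi_{\exp(\xi t)}(\mathbf{x}(0)),\bar{\mathbf{u}}\big) = \ell(\mathbf{x}(0),\bar{\mathbf{u}}),
\]
so the integrand is independent of $t$. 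Since $\ell$ is continuous and $x \in \mathcal{AC}([0,T],M)$, the map $t \mapsto \ell(\mathbf{x}(t),\mathbf{u}(t))$ is (Lebesgue-)measurable and, being constant, trivially integrable on $[0,T]$; hence
\[
	\int_0^T \ell(\mathbf{x}(t),\mathbf{u}(t))\,\mathrm{d}t = \int_0^T \ell(\mathbf{x}(0),\bar{\mathbf{u}})\,\mathrm{d}t = T \cdot \ell(\mathbf{x}(0),\bar{\mathbf{u}}),
\]
which is the assertion.

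There is essentially no hard step here: the statement is a direct corollary of the invariance of $\ell$ together with the trim representation $\mathbf{x}(t) = \Psi_{\exp(\xi t)}(\mathbf{x}(0))$. The only points worth stating carefully are (i) that $\exp(\xi t)$ genuinely lies in $\mathcal{G}$ for each $t$, so that the invariance identity is applicable, and (ii) the elementary measurability/integrability remark needed to write down the integral. If anything, the subtlety is purely bookkeeping — making sure the roles of the two "$g$'s" (the group element $g_t$ versus the dummy $g$ in Definition~\ref{DefinitionInvariantCostsConstraints}) are kept distinct. One could additionally remark that this is exactly what justifies calling $\ell(\mathbf{x}(0),\bar{\mathbf{u}})$ the \emph{unit cost} of the trim, as the total cost over a horizon of length $T$ scales linearly in $T$.
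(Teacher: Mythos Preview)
Your proof is correct and follows exactly the same approach as the paper: substitute the trim representation $\mathbf{x}(t)=\Psi_{\exp(\xi t)}(\mathbf{x}(0))$ and the constant control into the integrand, apply the invariance of $\ell$ to conclude the integrand is constant, and integrate. The paper's proof is simply a two-line version of yours without the additional measurability and bookkeeping remarks.
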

\begin{proof} We have
	\begin{align*}
		\int_0^T \ell(\mathbf{x}(t),\mathbf{u}(t))\,\mathrm{d}t 
		= \int_{0}^{T} \ell(\Psi_{\exp(\xi t)}(\mathbf{x}(0)),\bar{\mathbf{u}}) \, \mathrm{d}t = T\cdot \ell(\mathbf{x}(0),\bar{\mathbf{u}}).
	\end{align*}
	Here, the first equality follows from the trim definition and $u \equiv \bar{\mathbf{u}}$ and the second equality from the invariance of $\ell$. 
\end{proof}
Possible choices for invariant stage costs are the following. Note that also a weighted sum leads to an invariant cost function.
\begin{itemize}
	\item \emph{Minimal control effort}: $\ell(\mathbf{x},\mathbf{u}) = \Vert \mathbf{u} \Vert_{R}^{2}$ with $R$ being some symmetric positive definite matrix.
	\item \emph{Minimum path length}
	\item \emph{Minimum ``fuel consumption''}, i.e.\ $\ell(\mathbf{x},\mathbf{u}) = \Vert \mathbf{u} \Vert$. %
	For some systems, e.g.\ when $u$ models the fuel, it might be desirable to minimize the $L_{1}$ norm of $u$ instead of the $L_{2}$ norm, or a combination of both.
	\item \textit{Minimal time}, i.e.\ $\ell(\mathbf{x},\mathbf{u}) = 1$. %
	Here, we get $\int_{0}^{T} 1\, \mathrm{d}t = T$, i.e.\ the final time~$T$ is free. Then, $T$ is an additional real-valued optimization variable.
\end{itemize}

\subsection{Pitfalls, Inconsistency, and Remedies}\label{SubsectionInconsistentOCP}

In Subsection~\ref{SubsectionConsistentOCP}, we have seen a variety of invariant stage costs. A typical representative for invariant constraints would be a (geometrical) path bridging a certain distance, which explains why motion primitives are often employed for path planning objectives, see, e.g.\ \cite{FrDaFe05}. However, initial value problems with a fixed desired terminal state or quadratic stage costs, which are typically used for stabilization task, lead to inconsistent OCPs as shown in the following. Moreover, we explicate a \textit{modified}/shifted OCP, which allows to recover consistency of the OCP if desired.

Let us start with a (classical) initial condition, i.e.\  $\mathbf{x}(0) = \mathbf{x}^0$ or, equivalently,
\begin{align*}
	 r_i(\mathbf{x}(0),\mathbf{x}(T)) := \mathbf{x}_i(0) - \mathbf{x}^0_i = 0 \qquad\forall\, i \in \{1,\ldots,n\}.
\end{align*}
Note that only the first $n$ components of the function~$r$ describing the boundary conditions are used in this example. The remaining $n$ components would be typically employed to 
enforce meeting the terminal constraint. Plugging in $\Psi_g(\mathbf{x}(0))$ (and also $\Psi_g(\mathbf{x}(T))$ for completeness) yields the condition $\Psi_g(\mathbf{x}_i(0)) - \mathbf{x}^0_i = 0$, which is for $g \in \mathcal{G} \setminus \{e\}$, in general, not satisfied. However, a
 \textit{shifted} OCP with initial condition $\Psi_g(\mathbf{x}^0)$ --~a shift with the particular group element~$g$, $g \in \mathcal{G}$~-- is invariant, meaning that feasibility (and optimality) are preserved for the OCP with \textit{shifted} constraints. This can be explained using the representation of the symmetry action in homogeneous coordinates: the translation vector cancels out while the distance is preserved under the respective orthogonal transformation. Similar arguments apply to terminal conditions and quadratic stage costs exemplarily defined as
\begin{align}
	\ell(\mathbf{x},\mathbf{u}) := (\mathbf{x}-\mathbf{x}^\star)^\top Q (\mathbf{x}-\mathbf{x}^\star) + (\mathbf{u}-\mathbf{u}^\star)^\top R (\mathbf{u}-\mathbf{u}^\star)
\end{align}
with symmetric matrices $Q \in \mathbb{R}^{n \times n}$ and $R \in \mathbb{R}^{m \times m}$ (with $Q \succcurlyeq 0$ and $R \succ 0$) and references $\mathbf{x}^\star$ and $\mathbf{u}^\star$ for state and control respectively.

\begin{example}[Shifted OCP]\label{ExampleMobileRobotModifiedOCP}
	The robot of Example~\ref{ExampleMobileRobot} shall be controlled into the final state  $\mathbf{x}^\star = (0,\,0,\,0)^\top$. %
	First, consider $\mathbf{\hat{x}} = (-1,\,0,\,0)$ as a starting point. %
	It can be easily checked that constant control $(u_1,\,u_2) = (1/T,\,0)^T$ drives the system to zero at final time $T$. %
	Now, recall the symmetry group of the robot computed in Proposition~\ref{PropositionSymmetryGroup} %
	and say we choose $g \in \mathcal{G}$ defined by $\Delta \mathbf{x} = (1,\,1,\,0)$. %
	The shifted initial point is then given by $\Psi_g(\mathbf{\hat{x}}) = (0,\,1,\,0)^\top$, see Figure~\ref{fig:ShiftedOCP}.

	Now, the control problem is fundamentally different: $(u_1,\,u_2) = (1/T,\,0)^\top$ does not drive the system to $\mathbf{x}^\star$, %
	in fact we are faced with the famous parallel parking problem, see, e.g.\ \cite{ParoLaug96}. 
	A solution could be obtained by a sequence "turn-move-turn", as  presented in~\cite{Wort2015NMPC}, but, at the moment, our focus is on finding invariances. %
	As proposed above, to this aim we modify the terminal condition, i.e.\ the shifted final state becomes $\Psi_g(\mathbf{x}^\star) = (1,\,1,\,0)^\top$. %
	Then, the previously computed solution $(u_1,\,u_2) = (1/T,\,0)^\top$ can drive the system from $\Psi_g(\mathbf{\hat{x}})$ to $\Psi_g(\mathbf{x}^\star)$. %
	Consequently, solutions of the OCP would remain the same, given that the stage costs are either invariant or analogously modified.
\end{example}
\begin{figure}[htb]
	\centering
	\includegraphics[width=.25\textwidth]{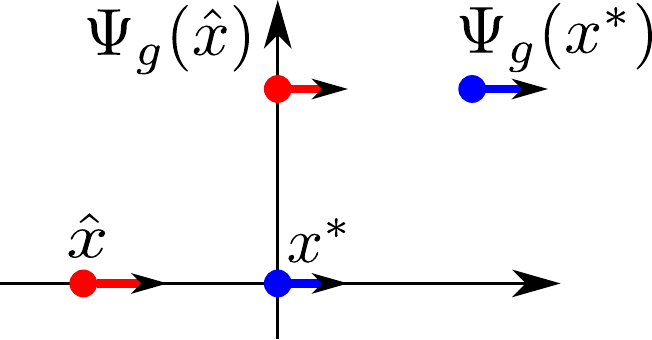}
	\caption{Illustration of the modified terminal constraint for Example~\ref{ExampleMobileRobotModifiedOCP}.}\label{fig:ShiftedOCP}
\end{figure}

\subsection{Optimal Control Problem for the Mobile Robot}\label{SubsectionMobileRobotOCP}

In Proposition~\ref{prob:RobotTrims} we identified characteristic motions of the mobile robot, i.e.~circles and straight lines, which are formally trim primitives. Since any pair $(u_1,u_2)$ of constant control values generates a trim we now switch perspective and define a class of admissible controls which guarantees that the system evolves on a finite sequence of trims. Furthermore, we show that an optimal control function exists for this class of admissible control functions.

Let us suppose that the control has to be piece-wise constant corresponding to the use of at most $S$, $S \in \mathbb{N}$, trims (which do not have to be pairwise different), i.e., for each control function~$u$, there exists a finite partition
\begin{align}\label{NotationPartition}
	0 =t_0 \leq t_1 \leq t_2 \leq \ldots \leq t_S = T
\end{align}	
such that the control function $u|_{[t_{i-1},t_i)}$ is constant for all $i \in \{1,\ldots,S\}$. %
Furthermore, the boundary condition is replaced by an initial and a terminal state constraint, i.e.\ $\mathbf{x}(0) = \hat{\mathbf{x}}$ and $\mathbf{x}(T) = \mathbf{x}^\star$. %
Moreover, instead of the (mixed) control-state constraint $g(\mathbf{x}(t),\mathbf{u}(t))$, %
we consider the state constraint $\mathbf{x}(t) \in \mathbb{X} \subset \mathbb{R}^n$ %
and the control constraint $\mathbf{u}(t) \in \mathbb{U} \subset \mathbb{R}^m$ for a compact and convex set~$\mathbb{X}$ and a compact set~$\mathbb{U}$ with $\mathbf{x}^\star \in \mathbb{X}$. Then, for a given optimization horizon~$T$, $T > 0$, and initial value~$\hat{\mathbf{x}}$, $\hat{\mathbf{x}} \in \mathbb{R}^n$, we define the set of admissible control functions 
\begin{align}\label{eq:admissibleControlFunctions}
	\mathcal{U}^S_T(\hat{\mathbf{x}}) := \left\{ 
		u \hspace*{-0.075cm}:\hspace*{-0.075cm} [0,T] \rightarrow \mathbb{U} \left| \begin{array}{l}
			\exists\ \text{partition~\eqref{NotationPartition}}: u|_{[t_{j-1},t_j)} \equiv \mathbf{u}_j \quad\forall\, j \in \{1,2,\ldots S\} \\ 
			\mathbf{x}(0;\hat{\mathbf{x}},u) = \hat{\mathbf{x}}\text{, }\mathbf{x}(T;\hat{\mathbf{x}},u) = \mathbf{x}^\star\text{ and, $\forall\, t \in [0,T]$,} \\
		\mathbf{x}(t;\hat{\mathbf{x}},u) \in \mathbb{X}\text{, } \dot{\mathbf{x}}(t;\hat{\mathbf{x}},u) = f(\mathbf{x}(t;\hat{\mathbf{x}},u),\mathbf{u}(t)) 
		\end{array} 	
	\right. \hspace*{-0.175cm} \right\}
\end{align}
where the set $\mathbb{U}$ of feasible control values is given by $[-\bar{\mathbf{u}},\bar{\mathbf{u}}] \subset \mathbb{R}^{2}$ with $\bar{u}_i > 0$ for all $i \in \{1,2\}$. Moreover, if the time horizon~$T$ is considered as an optimization variable, the set of admissible control functions is given as $\bigcup_{T > 0} \mathcal{U}_T^{S}(\hat{\mathbf{x}})$. %
Furthermore, for weighting coefficients $c_1 \geq 0$, $c_2 \geq 0$, and $c_3 \geq 0$, 
let the stage cost $\ell: \mathbb{R}^n \times \mathbb{R}^m \rightarrow \mathbb{R}_{\geq 0}$ by defined as
\begin{align*}
	\ell(\mathbf{x},\mathbf{u}) := c_1 \| \mathbf{u} \|_R^2 + c_2 \vertiii{\mathbf{u}} + c_3
\end{align*}
with $\| u \|_R^2 := u^\top R u$ for a positive definite, symmetric matrix $R$
and $\vertiii{\mathbf{u}}$ an arbitrary norm.

We consider the Optimal Control Problem
\begin{equation}
	\min_{u \in \mathcal{U}^S_T(\hat{\mathbf{x}})} J_T(u,\hat{\mathbf{x}}) \qquad\text{ or }\qquad \min_{T,u \in \bigcup_{T > 0} \mathcal{U}^S(\hat{\mathbf{x}})} J_T(u,\hat{\mathbf{x}}) \label{NotationOcpMinimalTime}
\end{equation}
for $c_3 = 0$ and $c_3 > 0$ respectively. %
Since the stage cost is positive definite w.r.t.\ $\mathbf{u}$, the infimum is bounded from below by zero and labeled~$V_T(\hat{\mathbf{x}})$.

The following lemmata show basic properties of the OCP~\eqref{NotationOcpMinimalTime} for Example~\ref{ExampleMobileRobot}; namely, existence of a feasible solution and finite cost.
\begin{lemma}\label{LemmaAdmissibility}
	Consider Example~\ref{ExampleMobileRobot} and the OCP~\eqref{NotationOcpMinimalTime}. %
	Then, if $S \geq 3$ and the (finite) set of control values contains the values $(0,\ 0)^\top$, $(\bar{u}_1,\ 0)^\top$, and $(0,\ \bar{u}_2)^\top$, %
	there exists, for each $\hat{\mathbf{x}} \in \mathbb{X} \setminus \{\mathbf{x}^\star\}$, a time horizon~$T$, $T > 0$, %
	and a control function $u \in \mathcal{U}^S_T(\hat{\mathbf{x}})$ such that $J_T(\hat{\mathbf{x}},u) < \infty$ holds.
\end{lemma}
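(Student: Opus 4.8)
The plan is to construct an explicit admissible control function for any given starting point $\hat{\mathbf{x}} = (\hat{x}_1, \hat{x}_2, \hat{x}_3)^\top \in \mathbb{X} \setminus \{\mathbf{x}^\star\}$ using exactly the three allowed control values, arranged as a \emph{turn--move--turn} maneuver (the classical parallel-parking style scheme already alluded to in Example~\ref{ExampleMobileRobotModifiedOCP} and in~\cite{Wort2015NMPC}). Concretely: on the first subinterval $[t_0,t_1)$ apply $\mathbf{u}_1 = (0, \bar{u}_2)^\top$ (or $\mathbf{u}_1 = (0,-\bar{u}_2)^\top$ if one permits the full box $\mathbb{U}$; to stay with the stated three values use $(0,\bar u_2)^\top$ and exploit periodicity of $S^1$) to rotate the heading $x_3$ to an angle $\alpha$ pointing from the current position straight toward $(x_1^\star, x_2^\star) = (0,0)$; on $[t_1, t_2)$ apply $\mathbf{u}_2 = (\bar{u}_1, 0)^\top$ to drive straight along that heading until the position $(x_1,x_2)$ reaches the origin; on $[t_2, t_3) = [t_2, T)$ apply $\mathbf{u}_3 = (0, \bar{u}_2)^\top$ again to rotate the heading from $\alpha$ to $0$ (mod $2\pi$). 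This uses $S = 3$ trim pieces, all control values lie in the prescribed finite set, and one checks directly from the flow formula in Proposition~\ref{prob:RobotTrims} that $\mathbf{x}(T) = \mathbf{x}^\star$.

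The next step is to fix the switching times so that the partition~\eqref{NotationPartition} is well-defined and all quantities are finite. With rotational speed $\bar{u}_2 > 0$, the first rotation takes time $t_1 = (\alpha - \hat{x}_3 \bmod 2\pi)/\bar{u}_2$, the straight segment takes time $t_2 - t_1 = \|(\hat{x}_1,\hat{x}_2)\|/\bar{u}_1$ (finite since the position is a fixed real vector), and the final rotation takes time $T - t_2 = ((- \alpha) \bmod 2\pi)/\bar{u}_2$; in the degenerate case where $(\hat{x}_1,\hat{x}_2) = (0,0)$ but $\hat{x}_3 \neq 0$ the straight segment has zero length and one only rotates. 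Set $T$ equal to the sum of these three durations. Since all three are finite and nonnegative, $0 = t_0 \le t_1 \le t_2 \le t_3 = T$ is a valid partition, and $T > 0$ because $\hat{\mathbf{x}} \neq \mathbf{x}^\star$ forces at least one of the three segments to be nontrivial.

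It then remains to verify the two constraints in the definition~\eqref{eq:admissibleControlFunctions} of $\mathcal{U}^S_T(\hat{\mathbf{x}})$ that are not yet obviously satisfied, namely the state constraint $\mathbf{x}(t) \in \mathbb{X}$ for all $t \in [0,T]$ and the finiteness of the cost $J_T(\hat{\mathbf{x}},u)$. For the cost: the stage cost $\ell(\mathbf{x},\mathbf{u}) = c_1\|\mathbf{u}\|_R^2 + c_2\vertiii{\mathbf{u}} + c_3$ is continuous and the control takes only finitely many (bounded) values over a finite horizon, so $J_T(\hat{\mathbf{x}},u) = \int_0^T \ell(\mathbf{x}(t),\mathbf{u}(t))\,\mathrm{d}t < \infty$ is immediate. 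For the state constraint, the honest subtlety is that $\mathbb{X}$ is only assumed compact and convex with $\mathbf{x}^\star \in \mathbb{X}$, so the straight segment joining $\hat{\mathbf{x}}$'s position to the origin might in principle leave $\mathbb{X}$. However, here one uses that the projected trajectory during the straight phase is the line segment from $(\hat x_1, \hat x_2)$ to $(0,0)$ in the position plane, while $x_3$ stays fixed; and during the two rotation phases the position does not move at all. Hence the position component of the whole trajectory lies in the (2D) convex hull of $\{(\hat x_1,\hat x_2),(0,0)\}$ together with the fixed endpoints — if one additionally assumes (as is standard and implicit in such set-point stabilization statements) that $\mathbb{X}$ is stated in position coordinates, or that the relevant segment lies in $\mathbb{X}$, convexity of $\mathbb{X}$ closes the argument. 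I expect this last point — reconciling the abstract compact-convex $\mathbb{X}$ with the geometry of the constructed maneuver — to be the only place requiring care; the rest is a direct computation with the trim formulas of Proposition~\ref{prob:RobotTrims}.
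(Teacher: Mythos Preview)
Your proposal is correct and follows essentially the same approach as the paper: both construct the \emph{turn--move--turn} sequence using $(0,\bar u_2)^\top$, $(\bar u_1,0)^\top$, $(0,\bar u_2)^\top$, and both invoke convexity of $\mathbb{X}$ for the state constraint together with continuity of~$\ell$ and compactness for finiteness of the cost. The paper adds one remark you omit: the value $(0,0)^\top$ is needed so that, in the case $c_3 = 0$ (where $T$ is not an optimization variable and may be prescribed larger than the minimal turn--move--turn time), the robot can remain at $\mathbf{x}^\star$ until the final time~$T$; your construction sidesteps this by choosing $T$ exactly equal to the maneuver duration, which is also fine for the existence claim as stated.
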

\begin{proof}
	Essentially, we can proceed analogously to~\cite{Wort2015NMPC,SchuWort18}. %
	Due to convexity of $\mathbb{X}$ and the fact that the length of the optimization horizon is not fixed, %
	we can simply prolong~$T$ such that the sequence
	\begin{enumerate}
		\item \textit{turn}, i.e.\ use $(0,\bar{u}_2)^\top$ until the robot points towards the origin, 
		\item \textit{move}, i.e.\ use $(\bar{u}_1,0)^\top$ until the robot reaches the origin, and
		\item \textit{turn}, i.e.\ use $(0,\bar{u}_2)^\top$ until the desired orientation is attained,
	\end{enumerate}%
	becomes feasible and ensures that $\mathcal{U}^S_T(\hat{\mathbf{x}})$ is non-empty. %
	Due to compactness of $\mathbb{X}$ and $\mathbb{U}$ and continuity of~$\ell$, this ensures finite costs. If $c_3 = 0$, i.e.\ minimal/short time is not weigthed in the stage cost, the third control value~$\mathbf{0}$ is important because it allows to stay at~$\mathbf{x}^\star$ until the final time~$T$ is reached (which is typically not an optimization variable for $c_3 = 0$). 
\end{proof}

In particular, Lemma~\ref{LemmaAdmissibility} ensures existence and finiteness of the infimum of OCP~\eqref{NotationOcpMinimalTime}. %
Hence, the value function $V: \mathbb{X} \rightarrow \mathbb{R}_{\geq 0}$ is well-defined. %
The following lemma shows that the optimum is attained, see, e.g.\ \cite{lee1967foundations} for a proof.
\begin{lemma}\label{lemma:optimum}
	Consider Example~\ref{ExampleMobileRobot} and the OCP~\eqref{NotationOcpMinimalTime} and let $\hat{\mathbf{x}} \in \mathbb{X} \setminus \{\mathbf{x}^\star\}$ be given. %
	Moreover, let $S \geq 3$ and the (finite) set of control values contain the values $(0,\ 0)^\top$, $(\bar{u}_1,\ 0)^\top$, and $(0,\ \bar{u}_2)^\top$.
	
	For $c_3>0$, there exists a time horizon~$T^\star$, $T^\star > 0$, and a control function $u^\star \in \mathcal{U}^S_{T^\star}(\hat{\mathbf{x}})$ %
	satisfying $J_{T^\star}(\hat{\mathbf{x}},u^\star) = V(\hat{\mathbf{x}})$. 
	
	For $c_3 = 0$ and optimization horizon~$T$ such that $\mathcal{U}^S_T(\hat{\mathbf{x}}) \neq \emptyset$, %
	there exists an admissible control function~$u^\star \in \mathcal{U}^S_{T^\star}(\hat{\mathbf{x}})$ with $J_T(\hat{\mathbf{x}},u^\star) = V(\hat{\mathbf{x}})$.
\end{lemma}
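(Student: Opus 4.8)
\emph{Proof sketch.} The plan is to recast \eqref{NotationOcpMinimalTime} as the minimisation of a continuous function over a non-empty compact subset of a finite-dimensional space and then to apply the theorem of Weierstrass; this is the elementary incarnation --- tailored to piecewise-constant controls with at most $S$ pieces --- of the classical existence theory for which \cite{lee1967foundations} is the standard reference. By the definition~\eqref{eq:admissibleControlFunctions}, every $u \in \mathcal{U}^S_T(\hat{\mathbf{x}})$ is determined by the switching instants $0 = t_0 \leq t_1 \leq \ldots \leq t_S = T$ and the control values $\mathbf{u}_1,\ldots,\mathbf{u}_S$, each of which ranges over the compact set~$\mathbb{U}$ (or over a prescribed finite library of trim-generating controls). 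For $c_3 = 0$ the horizon~$T$ is fixed, so I would work on the compact parameter set
\begin{align*}
	K_T := \bigl\{ (\tau_1,\ldots,\tau_{S-1}) \in \mathbb{R}^{S-1} : 0 \leq \tau_1 \leq \ldots \leq \tau_{S-1} \leq T \bigr\} \times \mathbb{U}^{S};
\end{align*}
for $c_3 > 0$ I would additionally treat~$T$ as an optimisation variable, after establishing an a~priori upper bound on it.

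First I would observe that the objective is continuous in these parameters: since $\ell(\mathbf{x},\mathbf{u}) = c_1\|\mathbf{u}\|_R^2 + c_2\vertiii{\mathbf{u}} + c_3$ does not depend on the state,
\begin{align*}
	J_T(u,\hat{\mathbf{x}}) = \sum_{j=1}^{S} (t_j - t_{j-1})\bigl( c_1\|\mathbf{u}_j\|_R^2 + c_2\vertiii{\mathbf{u}_j} + c_3 \bigr),
\end{align*}
which is manifestly continuous in $(t_1,\ldots,t_{S-1},\mathbf{u}_1,\ldots,\mathbf{u}_S)$ and, in the free-horizon case, jointly with~$T$. Secondly, the parameter-to-trajectory map is continuous as well: because $f$ in~\eqref{NotationMobileRobot} is globally Lipschitz and bounded on $M \times \mathbb{U}$, the time-$\tau$ flow of $\dot{\mathbf{x}} = f(\mathbf{x},\mathbf{v})$ depends jointly continuously on $\tau \geq 0$, on $\mathbf{v} \in \mathbb{U}$ and on the initial point, and the trajectory $t \mapsto \mathbf{x}(t;\hat{\mathbf{x}},u)$ on $[0,T]$ is a finite concatenation of such flows; hence it depends continuously, in the supremum norm on $[0,T]$, on the parameters --- a vanishing piece $t_{j-1} = t_j$ merely contributing the identity map.

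It then remains to treat feasibility. The terminal condition $\mathbf{x}(T;\hat{\mathbf{x}},u) = \mathbf{x}^\star$ defines a closed subset of $K_T$ as the preimage of $\{\mathbf{x}^\star\}$ under the continuous evaluation map; the pointwise state constraint $\mathbf{x}(t;\hat{\mathbf{x}},u) \in \mathbb{X}$ for all $t \in [0,T]$ likewise defines a closed subset, since $\{x \in \mathcal{C}([0,T],M) : x(t) \in \mathbb{X}\ \forall\, t\}$ is closed in the supremum norm ($\mathbb{X}$ being closed) and the parameter-to-trajectory map is supremum-norm continuous. Thus the feasible set is a closed, hence compact, subset of $K_T$; by Lemma~\ref{LemmaAdmissibility} it is non-empty (for $c_3 = 0$ this is precisely the standing assumption $\mathcal{U}^S_T(\hat{\mathbf{x}}) \neq \emptyset$). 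For $c_3 > 0$, Lemma~\ref{LemmaAdmissibility} additionally furnishes a feasible point of some finite cost $\bar{J}$, and from $J_T(u,\hat{\mathbf{x}}) \geq \int_0^T c_3\,\mathrm{d}t = c_3 T$ every candidate of cost at most $\bar{J}$ satisfies $T \leq \bar{J}/c_3 =: T_{\max}$; hence minimisation may be restricted to the compact set obtained by adjoining $T \in [0,T_{\max}]$ to the above parameters (with $0 \leq t_1 \leq \ldots \leq t_{S-1} \leq T$), on which the feasible set is again closed and, by Lemma~\ref{LemmaAdmissibility}, non-empty. In either case a continuous function is minimised over a non-empty compact set, so a minimiser $(T^\star,u^\star)$ exists and attains $V(\hat{\mathbf{x}})$; alternatively, one may adapt a standard optimal-control existence result such as those in \cite{lee1967foundations}.

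\emph{Expected main obstacle.} The only point demanding genuine care is the closedness of the feasible set, specifically that the \emph{pointwise-in-time} inclusion $\mathbf{x}(t) \in \mathbb{X}$ is preserved along converging parameter sequences; this is exactly why one needs supremum-norm (rather than merely pointwise) continuity of the parameter-to-trajectory map, combined with closedness of~$\mathbb{X}$. Everything else --- continuity of the reduced cost, compactness of the switching simplex and of $\mathbb{U}$, and the $c_3 T$ lower bound on the cost in the free-horizon case --- is routine and parallels the reasoning in \cite{Wort2015NMPC,SchuWort18}.
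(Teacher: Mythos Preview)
The paper does not actually supply a proof of this lemma; it merely cites \cite{lee1967foundations} and moves on. Your proposal is therefore not a reproduction of the paper's argument but a self-contained replacement for the omitted reference, and as such it is correct: the finite parametrisation by at most~$S$ switching times and~$S$ control values in the compact set~$\mathbb{U}$, together with continuity of the flow and of the (state-independent) running cost, reduces the problem to Weierstrass on a compact finite-dimensional set, exactly as you outline. The bound $T \leq \bar{J}/c_3$ in the free-horizon case is the right way to recover compactness when~$T$ is an optimisation variable, and your treatment of closedness of the feasible set via supremum-norm continuity of the parameter-to-trajectory map is the appropriate level of care.

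In short, the paper's ``proof'' is a one-line appeal to a general existence theorem in optimal control, whereas you give the elementary direct argument that the piecewise-constant structure makes available. Your route is more transparent here --- it avoids importing convexity or lower-semicontinuity machinery that the classical references typically use for $\mathcal{L}^\infty$-controls, machinery which is unnecessary once the control class is finitely parametrised --- at the cost of being specific to the quantised setting~$\mathcal{U}^S_T$ rather than applying verbatim to general measurable controls.
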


The preceding lemmata justify a restriction to control functions with piecewise constant control values and finitely many switches, %
since existence of admissible control functions can still be guaranteed. %
Restricting to $\mathcal{U}^S_T(\hat{\mathbf{x}})$ is a first step in the direction of a finite maneuver automaton, as proposed by Frazzoli.
In~\cite{FrDaFe05}, two types of motion primitives are distinguished: %
trim primitives, as we defined them in Definition~\ref{def:Trims}, and maneuvers. %
Maneuvers are arbitrarily controlled trajectories that start and end on trims. %
Thus, they allow to concatenate trims and maneuvers alternatingly. The resulting trajectories are admissible to the system dynamics, in particular, switching from trim to maneuver or from a preceding maneuver to a trim is continuous. %
The finite set of motion primitives is called a \emph{maneuver automaton}. %
It can be represented as a graph: Trim primitives are the nodes and edges correspond to maneuvers which have been designed to connect the preceding and succeeding trim.

The robot example is special, since any fixed pair of controls $(u_1,u_2)^{\top}$ leads to a trim %
and no maneuvers are necessary to switch from one trim to another. %
Therefore, we focus on constructing sequences which solely consist of trim primitives. 
\begin{remark}[Sequence of Trim Primitives]
	Any partition of type~\eqref{NotationPartition} %
	together with an arbitrary set of $S$ control values $\mathbf{u}_i = (u^i_1,u^i_2)^\top$ generates a sequence of trim primitives for the mobile robot, which can be transcribed as a trajectory ~$x \in \mathcal{AC}([0,T],M)$ via
\[ 
\mathbf{x}(t;\hat{\mathbf{x}},u) := 
\begin{cases}
\Psi(\exp(\xi_1 t),\hat{\mathbf{x}}) & \text{ for } t_0 \leq t \leq t_1,\\
\Psi(\exp(\xi_2 (t-t_1)),\mathbf{x}(t_1)) & \text{ for } t_1 < t \leq t_2,\\
\vdots\\
\Psi(\exp(\xi_S (t-t_{S-1})),\mathbf{x}(t_{S-1})) & \text{ for } t_{S-1} < t \leq t_S,\\
\end{cases}\]
with $u_i |_{[t_{i-1},t_i)} \equiv (u^i_1,u^i_2)^\top$ for $i=1,\dots,S$, $\xi_i$ being the Lie algebra element defined by $(u^i_1,u^i_2)^\top$ and $\mathbf{x}(t_{i-1};\hat{\mathbf{x}};u)$ according to Proposition~\ref{prob:RobotTrims}, and $\mathbf{x}(t_i)$ being a short notation for  $\mathbf{x}(t_i) := \mathbf{x}(t_i;\hat{\mathbf{x}},u)$.
\end{remark}

\section{Model Predictive Control for the Mobile Robot}\label{sec:sym_MPC}

This section is essentially split into two parts. Firstly, we demonstrate the effectiveness of the techniques proposed in the preceeding two sections by considering Model Predictive Control (MPC) for the mobile robot. To this end, we consider the following MPC scheme where the terminal state~$\mathbf{x}^\star$ is, w.l.o.g., set to zero. 
\begin{algorithm}
	Let $\mathbf{x}^0 \in \mathbb{X} \setminus \{\mathbf{0}\}$ and $\delta>0$ be given.\\
	Set $i = 0$, $t_0 = 0$, and $\hat{\mathbf{x}} = \mathbf{x}^0$.
	\begin{enumerate}
		\item Solve OCP~\eqref{NotationOcpMinimalTime} to compute a minimizer $T^\star$, $u^\star$ and implement $u^\star|_{[0,\min\{\delta,T^\star\})}$.
		\item Set $t_{i+1} := t_i + \min\{\delta,T^\star\}$ and $\hat{\mathbf{x}} := \mathbf{x}(t_{i+1}-t_i;\hat{\mathbf{x}},u^\star)$
		\item If $\hat{\mathbf{x}} = \mathbf{0}$ stop. Otherwise increment $i$ and go to Step~(1)
	\end{enumerate}\label{AlgorithmMpcMinimalTime}
\end{algorithm}

\noindent Algorithm~\ref{AlgorithmMpcMinimalTime} yields the MPC closed-loop trajectory
\begin{align}\label{NotationMpcClosedLoopTrajectory}
	\mathbf{x}^{\operatorname{MPC}}(t_{i+1};\mathbf{x}^0) := 
	\mathbf{x}(\min\{\delta,T^\star_{\mathbf{x}^{\operatorname{MPC}}(t_{i};\mathbf{x}^0)}\};\mathbf{x}^{\operatorname{MPC}}(t_{i};\mathbf{x}^0),u^\star_{\mathbf{x}^{\operatorname{MPC}}(t_{i};\mathbf{x}^0)})
\end{align}
where we emphasized the dependence of the optimal $T^\star$ and $u^\star$ on the initial value $\hat{\mathbf{x}} = \mathbf{x}^{\operatorname{MPC}}(t_i;\mathbf{x}^0)$, which we omitted in the exposition of Algorithm~\ref{AlgorithmMpcMinimalTime}. The concatenated control function is denoted by $u^{\operatorname{MPC}}$. Key properties in MPC are \textit{recursive feasibility} and convergence of the closed-loop trajectory to the desired set point $\mathbf{x}^\star = \mathbf{0}$. The former is important to guarantee that the feasible set of the OCP to be solved in Step~(1) of Algorithm~\ref{AlgorithmMpcMinimalTime} is non-empty provided it was non-empty at $t = 0$ (initial feasibility), see, e.g.\ \cite{RawlingsMayneDiehl2017}. For Algorithm~\ref{AlgorithmMpcMinimalTime}, recursive feasibility is ensured by the terminal constraint, see, e.g.\ \cite{Keerthi1988}.

For the convergence of the closed-loop trajectory, we distinguish whether the coefficient~$c_3$ weighting the \textit{process time} is present in the stage cost or not. If it is, the proof is very simple and presented in Subsection~\ref{SubsectionMinimalTime}. Here, we even prove finite time convergence of the MPC closed-loop trajectory given by~\eqref{NotationMpcClosedLoopTrajectory} to the origin.

Secondly, in Subsection~\ref{sec:qucosts} we give further comments on the usefullness of the proposed techniques if the stage cost is inconsistent with the invariance induced by the symmetry action. Here, we also eschew the terminal equality constraint, which is --~in general~-- not desirable from a numerical point of view.

\subsection{Minimizing Energy and Fuel Consumption} 

We consider the Optimal Control Problem
\begin{equation}
	\boxed{\begin{aligned}
		\text{Minimize}\quad & \int_0^T c_1 \| u(t) \|^2_R + c_2 \vertiii{u(t)}\,\mathrm{d}t 
		\quad\text{w.r.t.\ $u \in \mathcal{U}^S_T(\hat{\mathbf{x}})$}\\
		& \text{with $\mathbf{x}^\star = 0$ and vector field~$f$ given by}~\eqref{NotationMobileRobot}.
	\end{aligned}} \label{OcpMobileRobot}
\end{equation}
Before we rigorously show that the origin is asymptotically stable w.r.t.\ the MPC closed loop (in Theorem \ref{TheoremAsymptoticStability}), %
we establish that, for each optimal solution of the OCP~\eqref{OcpMobileRobot}, the control effort is uniformly distributed on the whole time interval~$[0,T]$ in the following proposition.
\begin{proposition}[Necessary Optimality Condition]\label{PropositionNecessaryOptimalityCondition}
	Let $u^\sharp \in \mathcal{U}^S_T(\hat{\mathbf{x}})$ be an admissible control function for the OCP~\eqref{OcpMobileRobot}. %
	Then, for given weighting coefficients $c_1 > 0$ and $c_2 \geq 0$, $u^\sharp$ either exhibits uniform control effort, i.e.
	\begin{equation}\label{PropertyUniformControlEffort}
		\exists\, c \in \mathbb{R}_{\geq 0}: \|u^\sharp(t)\|_R = c \quad\text{for almost all $t \in [0,T]$}
	\end{equation}
	or we can construct an admissible control function $\bar{u} \in \mathcal{U}^S_T(\hat{\mathbf{x}})$ with %
	\[ \int_0^T c_1 ( \| u^\sharp(t) \|^2_R - \| \bar{u}(t) \|_R^2 ) + c_2 ( \vertiii{ u^\sharp(t) } - \vertiii{ \bar{u}(t) } ) \,\mathrm{d}t > 0,\] i.e.\ %
	strictly smaller objective value.
\end{proposition}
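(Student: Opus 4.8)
The strategy is a standard "exchange argument" exploiting the fact that any trim (constant-control) segment can be arbitrarily re-timed: if we replace a control value $\mathbf{u}_j$ that is held on $[t_{j-1},t_j)$ by a scaled value $\alpha \mathbf{u}_j$ held for a correspondingly rescaled duration, then the geometric path traced by the robot is unchanged -- only the speed of traversal changes. This follows from Proposition~\ref{prob:RobotTrims}, since the translation vector $\mathbf{b}_{g_t}$ depends on $(u_1,u_2,t)$ only through the products/combinations that are invariant under $(u_1,u_2,t)\mapsto(\alpha u_1,\alpha u_2,t/\alpha)$ (in the $u_2\neq 0$ case, $u_2 t$ and $v_i/u_2$ are preserved; in the $u_2=0$ case, $v_i t$ is preserved). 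Hence feasibility -- the endpoint constraint $\mathbf{x}(T)=\mathbf{x}^\star$, the passage through $\mathbb{X}$, and membership in $\mathbb{U}$ for $\alpha\le 1$ -- is maintained, provided the total time still sums to $T$ (or, when $T$ is free, provided we are in the $c_3=0$ regime where $T$ may be enlarged and padded with the zero control).

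Concretely, I would proceed as follows. First, fix an admissible $u^\sharp$ with switching times $0=t_0\le\cdots\le t_S=T$ and constant values $\mathbf{u}_j$ on $[t_{j-1},t_j)$, and set $\tau_j:=t_j-t_{j-1}$, $r_j:=\|\mathbf{u}_j\|_R$. Suppose $u^\sharp$ does \emph{not} have uniform control effort, so there exist indices $j,k$ with $\tau_j,\tau_k>0$ and $r_j\neq r_k$. The plan is to redistribute time between segments $j$ and $k$: pick scalars $\alpha_j,\alpha_k>0$ and new durations $\tilde\tau_j=\tau_j/\alpha_j$, $\tilde\tau_k=\tau_k/\alpha_k$ with new control values $\alpha_j\mathbf{u}_j$, $\alpha_k\mathbf{u}_k$, chosen so that $\tilde\tau_j+\tilde\tau_k=\tau_j+\tau_k$ (conserving total time, hence preserving all other segments and the terminal condition) and so that the new effort levels coincide: $\alpha_j r_j=\alpha_k r_k=:c$. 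Solving these two scalar equations is elementary and yields $c$ strictly between $r_j$ and $r_k$, in particular $\alpha_j\le 1$ or $\alpha_k\le 1$ may fail for one of them -- this is the one delicate point (see below). Then I would compute the change in cost. For the quadratic term, using $\int$ over a trim of effort $r$ and duration $\tau$ equals $c_1 r^2\tau$, the contribution of segments $j,k$ changes from $c_1(r_j^2\tau_j+r_k^2\tau_k)$ to $c_1 c^2(\tilde\tau_j+\tilde\tau_k)=c_1 c^2(\tau_j+\tau_k)$, and strict convexity of $s\mapsto s^2$ combined with the constraint that the time-weighted "speeds" interpolate gives a strict decrease. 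For the $\vertiii{\cdot}$ term one argues similarly: it contributes $c_2(\vertiii{\mathbf{u}_j}\tau_j+\vertiii{\mathbf{u}_k}\tau_k)$ before and $c_2\vertiii{\cdot}$-of-the-common-value times $(\tau_j+\tau_k)$ after; by positive homogeneity of the norm this term does not increase (and the net change is $>0$ as soon as the quadratic part strictly decreases, since $c_1>0$).

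The main obstacle is the control constraint $\mathbf{u}(t)\in\mathbb{U}=[-\bar{\mathbf u},\bar{\mathbf u}]$: equalizing two segments forces the effort on the "slower" segment \emph{up}, which could push $\alpha_k\mathbf{u}_k$ outside $\mathbb{U}$. The remedy is to equalize downward rather than globally: instead of matching $r_j$ and $r_k$ to a common value, move an infinitesimal amount of time $\varepsilon$ from the lower-effort segment to the higher-effort one -- i.e. slow down the fast segment and speed up the slow one by a small amount -- which strictly decreases the quadratic cost (first-order variation has sign $r_j^2-r_k^2\neq 0$) while staying inside $\mathbb{U}$ because we only ever \emph{decrease} the larger control magnitude and increase the smaller one by an arbitrarily small factor $1/(1-\varepsilon/\tau_k)$ close to $1$; choosing $\varepsilon$ small keeps $\alpha_k\mathbf{u}_k\in\mathbb{U}$ by compactness/interior arguments, or, in the boundary-degenerate case, one instead transfers time out of the active segment, which only relaxes the constraint. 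This local exchange already yields a strictly better admissible $\bar u\in\mathcal{U}^S_T(\hat{\mathbf x})$, which is exactly the dichotomy claimed. (When $T$ is an optimization variable and $c_3=0$, an even simpler argument is available: append a zero-control trim and rescale, but for the stated OCP~\eqref{OcpMobileRobot} with fixed $\mathcal{U}^S_T$ the time-transfer argument is the clean one.) I would close by noting that iterating this exchange across all pairs of segments -- or invoking it at a minimizer -- shows any optimal control must satisfy~\eqref{PropertyUniformControlEffort}.
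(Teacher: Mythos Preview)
Your approach is essentially the same as the paper's: both exploit the path-preservation property $\mathbf{x}(\alpha t;\bar{\mathbf{x}},u)=\mathbf{x}(t;\bar{\mathbf{x}},\alpha u)$ for constant controls to transfer time from a low-effort segment to a high-effort one via a small perturbation ($\alpha$ close to $1$), observing that the $c_2\vertiii{\cdot}$ contribution is invariant under this rescaling while the $c_1\|\cdot\|_R^2$ contribution strictly decreases. The paper carries out the computation explicitly with scaling factors $\alpha,\beta$ linked by the time-budget constraint (its Equation~\eqref{EquationKathrin}) rather than arguing via a first-order variation, and it defers the control-constraint issue you flag to a separate corollary (with the threshold $r^\star$) rather than handling it inside the proposition---your parenthetical ``boundary-degenerate'' fix is not quite right as stated (speeding up the low-effort segment is unavoidable in this construction), but this matches the paper's own treatment.
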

\begin{proof}
	Since $u^\sharp$ is piecewise constant there exists a finite partition
	\[ 
		t_0 := 0 < t_1 < \ldots < t_{S-1} < t_S := T \qquad\text{with}\qquad S \in \mathbb{N}
	\]
	such that $u^\sharp$ is constant on each interval~$(t_{i-1},t_{i})$, $i \in \{1,2,\ldots,S\}$. %
	Assume, w.l.o.g., that the control function $u^\sharp$ exhibits values $u^{(1)}$ on $(t_0,t_1)$ and $u^{(2)}$ on $(t_1,t_2)$ %
	such that $\| u^{(1)} \|^2_R \neq \| u^{(2)} \|_R^2$ holds. Then, we have the costs
	\begin{align}\label{NotationOldCosts}
		t_1 \left( c_1 \| u^{(1)} \|^2_R + c_2 \vertiii{ u^{(1)} } \right) + (t_2-t_1) \left(c_1 \| u^{(2)} \|_R^2 + c_2 \vertiii{ u^{(2)} } \right)
	\end{align}
	on the time interval $[0,t_2)$. %
	In the following, we construct a piecewise constant control %
	such that the corresponding trajectory reaches the same point $\mathbf{x}(t_2;\hat{\mathbf{x}},u^\star)$ at time $t_2$ but produces less costs %
	--- a contradiction to optimality of $u^\sharp$, which shows the claimed assertion. To this end, we exploit the property
	\begin{align}\label{PropertyRaute}
		\mathbf{x}(\alpha t;\bar{\mathbf{x}},u) = \mathbf{x}(t;\bar{\mathbf{x}},\alpha u) 
		\qquad\forall\, (t,\bar{\mathbf{x}}) \in [0,\tau] \times \mathbb{R}^2
	\end{align}
		for arbitrary $\tau > 0$ if $u$ is constant on $(0,\tau)$, see~\cite{SchuWort18}.

	W.l.o.g.\ let $\| u^{(2)} \|_R > \| u^{(1)} \|_R$ hold. Then, we replace $u^{(2)}$ by a scaled version; %
	namely $\alpha u^{(2)}$ with $\alpha \in (0,1)$ sufficiently close to one such that all following quantities are well-defined. %
	Moreover, we enlarge the length of the interval $[t_1,t_2)$ by the factor $\alpha^{-1}$. %
	This implies, using the Identity~\eqref{PropertyRaute}, that the same path ---~starting at $\mathbf{x}(t_1;\hat{\mathbf{x}},u^\sharp)$~--- %
	is traversed in the state space but with a slower speed. %
	Simultaneously, we enlarge $u^{(1)}$ and reduce the length of the respective time interval $[0,t_1)$ such that %
	the same path is traversed and the overall length of both intervals remains unchanged. %
	The latter implies $\alpha^{-1} (t_1 - (1-\alpha)t_2) = t_1/\beta$ if the scaling factor used for $u^{(1)}$ is called~$\beta$. In particular, we get the equations
	\begin{align}\label{EquationKathrin} 
		\beta - 1 = \frac{(1-\alpha)(t_2-t_1)}{t_1-(1-\alpha)t_2}.
	\end{align}
Using the representation of $\beta$ displayed in~\eqref{EquationKathrin} to rewrite the factor $t_1/\beta$ in front of the first summand, we get the costs
\begin{align*}
	\frac {t_1}{\beta} 
	\left( \beta^2 c_1 \| u^{(1)} \|^2_R + \beta c_2 \vertiii{ u^{(1)} } \right) + \frac {t_2-t_1}{\alpha} \left( \alpha^2 c_1 \| u^{(2)} \|^2_R + \alpha c_2 \vertiii{ u^{(2)} } \right).
\end{align*}
Subtracting the original value~\eqref{NotationOldCosts} and showing that the resulting expression is strictly less than zero completes the proof. Hence, we have to establish the inequality
\begin{align*}
	t_1 (\beta-1) c_1 \| u^{(1)} \|^2_R < (1-\alpha)(t_2-t_1) c_1 \| u^{(2)} \|_R^2.
\end{align*}
Then, replacing $(\beta - 1)$ using \eqref{EquationKathrin} and dividing by $c_1 (t_2-t_1)(1-\alpha)$ yields
\begin{align*}
	(1-\alpha)t_2 \| u^{(2)} \|_R^2 < t_1 ( \| u^{(2)} \|_R^2 - \| u^{(1)} \|_R^2).
\end{align*}
	The right hand side is independent of the scaling factor~$\alpha$ and strictly larger than zero, %
	while the left hand side converges to zero for $\alpha \rightarrow 1$. %
	In conclusion, the inequality is satisfied for sufficiently small $\alpha > 0$. 
\end{proof}

The following corollary extends the assertion of Proposition~\ref{PropositionNecessaryOptimalityCondition} to a more general set of admissible control functions, which turns out to be helpful for the proof of the following theorem.
\begin{corollary}\label{CorollaryConstantControlEffort}
	Proposition~\ref{PropositionNecessaryOptimalityCondition} also holds for $u \in \mathcal{L}^{\infty}([0,T],\mathbb{R}^2)$ %
	except on a set of measure zero. 
\end{corollary}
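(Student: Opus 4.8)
The plan is to transfer the "swap two adjacent constant pieces" argument of Proposition~\ref{PropositionNecessaryOptimalityCondition} to a measure-theoretic setting by replacing the affine time rescaling~\eqref{PropertyRaute} with a general monotone time reparametrization. The only structural fact used is that the mobile-robot vector field~\eqref{NotationMobileRobot} is positively homogeneous of degree one in the control, $f(\mathbf{x},\alpha\mathbf{u})=\alpha f(\mathbf{x},\mathbf{u})$ for $\alpha\geq0$, which is exactly what makes~\eqref{PropertyRaute} work. First I would record the general form of~\eqref{PropertyRaute}: for any strictly increasing absolutely continuous bijection $\tau\colon[0,T]\to[0,T]$ with $\tau(0)=0$, $\tau(T)=T$, the curve $\mathbf{y}(s):=\mathbf{x}(\tau(s);\hat{\mathbf{x}},u^\sharp)$ is, by the chain rule and the $1$-homogeneity of $f$ in $\mathbf{u}$, the solution for the control $\mathbf{v}(s):=\tau'(s)\,u^\sharp(\tau(s))$; it has the same endpoints $\hat{\mathbf{x}}$ and $\mathbf{x}^\star=\mathbf{0}$ and the same image in state space, hence stays in $\mathbb{X}$. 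Writing $w(t):=1/(\tau^{-1})'(t)$, the substitution $t=\tau(s)$ together with the positive $1$-homogeneity of $\vertiii{\cdot}$ gives
\[
	J_T(\mathbf{v},\hat{\mathbf{x}})=c_1\int_0^T w(t)\,\|u^\sharp(t)\|_R^2\,\mathrm{d}t+c_2\int_0^T\vertiii{u^\sharp(t)}\,\mathrm{d}t,
\]
so the $c_2$-(path length) term is reparametrization invariant, and admissibility of $\mathbf{v}$ amounts to $w>0$ measurable, bounded and bounded away from $0$, $\int_0^T w(t)^{-1}\,\mathrm{d}t=T$ (which encodes $\tau^{-1}(T)=T$), and $w(t)\,u^\sharp(t)\in\mathbb{U}$ a.e.

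It then remains to produce such a $w$ with $\int_0^T w\,\|u^\sharp\|_R^2<\int_0^T\|u^\sharp\|_R^2$ whenever $t\mapsto\|u^\sharp(t)\|_R$ is not constant almost everywhere. That hypothesis yields, by continuity of Lebesgue measure, some $m>0$, some $\varepsilon\in(0,m]$, and some $\eta>0$ for which $B_2:=\{\,t:\|u^\sharp(t)\|_R\geq m+\varepsilon\,\}$ and $B_1:=\{\,t:\|u^\sharp(t)\|_R\leq m-\varepsilon,\ \|u^\sharp(t)\|_\infty\leq\bar{\mathbf{u}}-\eta\,\}$ both have positive measure (the extra condition on $B_1$ is there to protect the control constraint). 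For a small parameter $\lambda>0$ I would take $w\equiv1$ off $B_1\cup B_2$, $w\equiv(1+\lambda)^{-1}$ on $B_2$, and $w\equiv1+\mu$ on $B_1$ with $\mu=\mu(\lambda)>0$ fixed by $\int_0^T w^{-1}=T$, i.e.\ $\mu/(1+\mu)=|B_2|\lambda/|B_1|$, so $\mu\to0$ as $\lambda\to0^+$. Then $\mathbf{v}\in\mathbb{U}$ holds: on $B_2$ one scales toward the origin, and on $B_1$ one has $\|(1+\mu)u^\sharp\|_\infty\leq(1+\mu)(\bar{\mathbf{u}}-\eta)\leq\bar{\mathbf{u}}$ for $\lambda$ small. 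Finally, bounding $\int_{B_2}\|u^\sharp\|_R^2\geq|B_2|(m+\varepsilon)^2$, $\int_{B_1}\|u^\sharp\|_R^2\leq|B_1|(m-\varepsilon)^2$ and using the defining relation for $\mu$ gives
\[
	\int_0^T(w(t)-1)\|u^\sharp(t)\|_R^2\,\mathrm{d}t\leq|B_2|\lambda\Bigl(-\tfrac{(m+\varepsilon)^2}{1+\lambda}+(1+\mu)(m-\varepsilon)^2\Bigr)\ \xrightarrow{\ \lambda\to0^+\ }\ -4m\varepsilon\,|B_2|<0,
\]
so for small $\lambda>0$ we get $J_T(\mathbf{v},\hat{\mathbf{x}})<J_T(u^\sharp,\hat{\mathbf{x}})$ since $c_1>0$ and the $c_2$-term is unchanged; this is the claimed dichotomy over $\mathcal{L}^\infty([0,T],\mathbb{R}^2)$.

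The main obstacle is precisely the control constraint $\mathbf{v}(t)\in\mathbb{U}$: the reparametrization accelerates the system where $\|u^\sharp\|_R$ is small, scaling the control by a factor $>1$ there, which is only admissible if $u^\sharp$ does not already saturate $\mathbb{U}$ on that set — hence the extra condition defining $B_1$. This is harmless except in the degenerate situation where $u^\sharp$ touches $\partial\mathbb{U}$ almost everywhere on $\{\|u^\sharp\|_R\leq m-\varepsilon\}$ for every admissible $m,\varepsilon$; that case requires the same care (and the same implicit genericity reading) already present in the scaling construction of Proposition~\ref{PropositionNecessaryOptimalityCondition}. A cleaner, coordinate-free way to see the statement — and to identify that the optimal reparametrization is exactly the one making $\|\mathbf{v}\|_R$ constant — is the Cauchy–Schwarz inequality $T\int_0^T w\,\|u^\sharp\|_R^2\geq\bigl(\int_0^T\|u^\sharp\|_R\bigr)^2$, equality iff $w\,\|u^\sharp\|_R$ is a.e.\ constant: if $\|u^\sharp\|_R$ is not a.e.\ constant, moving an infinitesimal amount from $w\equiv1$ toward (a bounded truncation of) the minimizer $w\propto\|u^\sharp\|_R^{-1}$ strictly decreases the $c_1$-term while keeping $\int_0^T w^{-1}=T$, subject to the same control-constraint caveat.
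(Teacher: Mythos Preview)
Your argument is correct and takes a genuinely different route from the paper. The paper's proof is two lines: it observes that the two-interval swap in Proposition~\ref{PropositionNecessaryOptimalityCondition} never used the bound~$S$ on the number of switches, hence applies verbatim to arbitrary piecewise-constant controls, and then appeals to density of piecewise-constant functions in $\mathcal{L}^\infty([0,T],\mathbb{R}^2)$ to pass to the general case. You instead give a direct, self-contained construction: you replace the affine rescaling~\eqref{PropertyRaute} by a general absolutely continuous time change, exploit the positive $1$-homogeneity of~$f$ in~$\mathbf{u}$, and show that any $\mathcal{L}^\infty$ control with non-constant $\|u^\sharp(\cdot)\|_R$ can be strictly improved by decelerating on a set where $\|u^\sharp\|_R$ is large and compensating on a set where it is small. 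Your approach is more constructive and sidesteps the density step, which the paper leaves entirely implicit; the closing Cauchy--Schwarz remark is also a clean, coordinate-free explanation of \emph{why} equal effort is optimal among reparametrizations. Your treatment of the control-constraint caveat via the extra condition defining~$B_1$ is exactly the right bookkeeping, and you are correct that the same issue is already latent in Proposition~\ref{PropositionNecessaryOptimalityCondition} and only resolved later in Corollary~\ref{CorollaryNecessaryOptimalityCondition}.

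One small slip: in your final display, the product $|B_2|\lambda\bigl(-\tfrac{(m+\varepsilon)^2}{1+\lambda}+(1+\mu)(m-\varepsilon)^2\bigr)$ tends to~$0$ as $\lambda\to0^+$, not to $-4m\varepsilon\,|B_2|$. What you actually need (and what your computation shows) is that the \emph{bracket} tends to $-(m+\varepsilon)^2+(m-\varepsilon)^2=-4m\varepsilon<0$, so the whole expression is strictly negative for all sufficiently small $\lambda>0$. Your conclusion is unaffected.
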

\begin{proof}
	Note that we have not used that there were at most $S$ switches. %
	Hence, the line of reasoning works for arbitrary piecewise control functions %
	--- a class, which is dense in $\mathcal{L}^{\infty}([0,T],\mathbb{R}^2)$, which allows us to conclude the assertion. 
\end{proof}

Proposition~\ref{PropositionNecessaryOptimalityCondition} and its extension formulated in Corollary~\ref{CorollaryConstantControlEffort} are key ingredients to prove convergence of the MPC closed-loop trajectory to the origin %
since they allow us to derive a decrease of the value function. 
\begin{theorem}\label{TheoremAsymptoticStability}
	Let $c_1 > 0$, $c_2 \geq 0$, and an initial state~$\mathbf{x}^0$ be given. %
	Then, we have recursive feasibility for the MPC algorithm~\ref{AlgorithmMpcMinimalTime} and, if $\mathbf{x}^0$ is initially feasible, %
	the corresponding MPC closed-loop trajectory converges to the origin, %
	i.e.\ $\mathbf{x}^{\operatorname{MPC}}(t;x^0)$ exists for all $t \geq 0$ and %
	$\lim_{t \rightarrow \infty} \mathbf{x}^{\operatorname{MPC}}(t;x^0) = \mathbf{0}$ holds.
\end{theorem}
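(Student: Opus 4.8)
The plan is to use the value function $V$ as a Lyapunov function for the MPC closed loop; the twist, and the main obstacle, is that the stage cost here is positive definite only in $\mathbf{u}$ and not in $\mathbf{x}$, so the decrease of $V$ that MPC ``sees'' over one sampling interval must be linked to the distance of the state to the origin, and this is exactly where the uniform-effort property of Proposition~\ref{PropositionNecessaryOptimalityCondition} is needed. Throughout, write $\hat{\mathbf{x}}_i := \mathbf{x}^{\operatorname{MPC}}(t_i;\mathbf{x}^0)$ and $\bar\delta := \min\{\delta,T\}$. Since $c_3=0$, the horizon $T$ is fixed, hence $T^\star=T$ and $t_{i+1}-t_i=\bar\delta$ at every step; if $\bar\delta=T$ then MPC steers to the origin after one step and we are done, so assume $\bar\delta<T$.

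\emph{Recursive feasibility and global existence.} Let $\hat{\mathbf{x}}_i$ be feasible with minimiser $u^\star_i\in\mathcal{U}^S_T(\hat{\mathbf{x}}_i)$ (Lemma~\ref{lemma:optimum}). Define $\tilde u$ on $[0,T]$ by $\tilde u(s):=u^\star_i(s+\bar\delta)$ on $[0,T-\bar\delta]$ --- which steers $\hat{\mathbf{x}}_{i+1}$ to $\mathbf{x}^\star=\mathbf{0}$ along a concatenation of trims remaining in $\mathbb{X}$ --- followed by the admissible value $(0,0)^\top\in\mathbb{U}$ on $[T-\bar\delta,T]$, which holds the system at the origin since $f(\mathbf{0},(0,0)^\top)=\mathbf{0}$ and $\mathbf{0}\in\mathbb{X}$. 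Thus $\tilde u\in\mathcal{U}^S_T(\hat{\mathbf{x}}_{i+1})\neq\emptyset$, i.e.\ recursive feasibility holds (the usual shift argument, cf.~\cite{Keerthi1988}). Consequently every closed-loop state lies in the compact set $\mathbb{X}$ and, $f$ being globally Lipschitz, $\mathbf{x}^{\operatorname{MPC}}(t;\mathbf{x}^0)$ exists for all $t\geq 0$.

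\emph{Decrease of $V$.} Using $\tilde u$ as a competitor at $\hat{\mathbf{x}}_{i+1}$, the fact that $\ell(\mathbf{0},(0,0)^\top)=0$ (because $c_3=0$), and uniqueness of solutions, one computes $J_T(\tilde u,\hat{\mathbf{x}}_{i+1})=\int_{\bar\delta}^T\ell(\mathbf{x}(s;\hat{\mathbf{x}}_i,u^\star_i),u^\star_i(s))\,\mathrm{d}s=V(\hat{\mathbf{x}}_i)-\int_0^{\bar\delta}\ell(\mathbf{x}^{\operatorname{MPC}}(t_i+s),u^{\operatorname{MPC}}(t_i+s))\,\mathrm{d}s$, hence by optimality
\[
	V(\hat{\mathbf{x}}_{i+1}) \;\leq\; V(\hat{\mathbf{x}}_i) - \int_0^{\bar\delta}\ell\big(\mathbf{x}^{\operatorname{MPC}}(t_i+s;\mathbf{x}^0),u^{\operatorname{MPC}}(t_i+s)\big)\,\mathrm{d}s .
\]
Since $\ell\geq 0$, the sequence $(V(\hat{\mathbf{x}}_i))_i$ is nonincreasing and bounded below, hence convergent; telescoping the inequality forces $\int_0^{\bar\delta}\ell(\mathbf{x}^{\operatorname{MPC}}(t_i+s),u^{\operatorname{MPC}}(t_i+s))\,\mathrm{d}s\to 0$ and, as $c_1>0$, $\int_0^{\bar\delta}\|u^{\operatorname{MPC}}(t_i+s)\|_R^2\,\mathrm{d}s\to 0$.

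\emph{From the decrease to convergence (the crux).} So far we only control the \emph{effort} over the short interval $[t_i,t_{i+1}]$, not the state, which is why the usual positive-definiteness argument fails. Proposition~\ref{PropositionNecessaryOptimalityCondition} closes this gap: the minimiser $u^\star_i$ has uniform control effort, $\|u^\star_i(t)\|_R\equiv c_i$ on $[0,T]$, so $\bar\delta\,c_i^2=\int_0^{\bar\delta}\|u^{\operatorname{MPC}}(t_i+s)\|_R^2\,\mathrm{d}s\to 0$, i.e.\ $c_i\to 0$; hence, with $C:=\max\{\vertiii{v}:\|v\|_R=1\}<\infty$,
\[
	V(\hat{\mathbf{x}}_i)=\int_0^T\big(c_1c_i^2+c_2\vertiii{u^\star_i(s)}\big)\,\mathrm{d}s\;\leq\;T\,(c_1c_i^2+c_2Cc_i)\;\longrightarrow\;0 .
\]
It remains to bound the distance to the origin by $V$: for admissible $u$ with trajectory $x$, $x(0)=\hat{\mathbf{x}}$, $x(T)=\mathbf{0}$, integrating~\eqref{NotationMobileRobot} gives $|\hat x_1|,|\hat x_2|\leq\int_0^T|u_1(s)|\,\mathrm{d}s$ and $\operatorname{dist}_{S^1}(\hat x_3,0)\leq\int_0^T|u_2(s)|\,\mathrm{d}s$, so by Cauchy--Schwarz and $\|u\|^2\leq\lambda_{\min}(R)^{-1}\|u\|_R^2$ one obtains $\operatorname{dist}(\hat{\mathbf{x}},\mathbf{0})^2\leq\frac{2T}{c_1\lambda_{\min}(R)}J_T(u,\hat{\mathbf{x}})$, and minimising over $u$ yields $\operatorname{dist}(\hat{\mathbf{x}},\mathbf{0})^2\leq\frac{2T}{c_1\lambda_{\min}(R)}V(\hat{\mathbf{x}})$. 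Therefore $V(\hat{\mathbf{x}}_i)\to 0$ implies $\hat{\mathbf{x}}_i\to\mathbf{0}$; since $t_i=i\bar\delta\to\infty$ and the same Cauchy--Schwarz estimate shows the closed-loop displacement on each interval $[t_i,t_i+\bar\delta]$ tends to $0$, we conclude $\lim_{t\to\infty}\mathbf{x}^{\operatorname{MPC}}(t;\mathbf{x}^0)=\mathbf{0}$. (Complementing this lower bound with an upper bound $V(\hat{\mathbf{x}})\leq\alpha(\operatorname{dist}(\hat{\mathbf{x}},\mathbf{0}))$ for some $\alpha\in\mathcal{K}$, built from an explicit steering law, would moreover give Lyapunov stability and hence asymptotic stability of the origin.)
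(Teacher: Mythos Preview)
Your proof is correct and reaches the stated conclusion. It shares with the paper the crucial use of the uniform-effort property (Proposition~\ref{PropositionNecessaryOptimalityCondition}), but the overall architecture is genuinely different and, in a sense, more economical.

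The paper proceeds by manufacturing a quantitative Lyapunov inequality
\[
V(\mathbf{x}(\delta;\hat{\mathbf{x}},u^\star)) \;\le\; V(\hat{\mathbf{x}}) - \frac{\lambda_R c_1 \delta}{T^2}\,\|\hat{\mathbf{x}}\|^2.
\]
To get there it chains uniform effort with a sequence of relaxations (drop the $c_2$-term, enlarge to $\mathcal{L}^\infty$ via Corollary~\ref{CorollaryConstantControlEffort}, replace $\|\cdot\|_R$ by $\lambda_R\|\cdot\|$) and finally invokes the auxiliary OCP with \emph{decoupled} dynamics solved in the Appendix (Proposition~\ref{PropositionSimplifiedDynamics}, via Pontryagin's principle) to obtain the lower bound $\widetilde{\widetilde{V}}(\hat{\mathbf{x}})=\|\hat{\mathbf{x}}\|^2/T$. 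Your route bypasses the Appendix entirely: from the standard MPC decrease you telescope to $c_i\to 0$ via uniform effort, deduce $V(\hat{\mathbf{x}}_i)\le T(c_1c_i^2+c_2Cc_i)\to 0$, and then convert this into $\hat{\mathbf{x}}_i\to 0$ by the elementary Cauchy--Schwarz estimate $\operatorname{dist}(\hat{\mathbf{x}},\mathbf{0})^2\le \frac{2T}{c_1\lambda_{\min}(R)}V(\hat{\mathbf{x}})$, which plays exactly the role of Proposition~\ref{PropositionSimplifiedDynamics} but without solving any auxiliary OCP. What you lose is the explicit state-dependent decrease rate (and hence the direct route to Lyapunov stability, which you rightly flag as needing a separate $\mathcal{K}$-upper bound); what you gain is a short, self-contained argument needing nothing beyond Proposition~\ref{PropositionNecessaryOptimalityCondition}.

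One minor technical quibble, shared with the paper's own proof: your recursive-feasibility candidate $\tilde u$ appends a zero piece to the shift $u_i^\star(\cdot+\bar\delta)$, which in the worst case yields $S+1$ constant pieces, so $\tilde u\in\mathcal{U}_T^S(\hat{\mathbf{x}}_{i+1})$ is not literally guaranteed by the construction. The paper glosses over this as well, and it is harmless once $S$ is taken large enough or if one works directly in $\mathcal{L}^\infty$, as the paper remarks all results of this section do.
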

\begin{proof}
	Since the optimal control problem contains a terminal equality constraint, recursive feasibility holds provided that initial feasibility is given. %
	To prove the convergence, %
	note that Proposition~\ref{PropositionNecessaryOptimalityCondition} implies that, %
	for an optimal control function~$u^\star$, $\| \mathbf{u}^\star(t)\|_R^2$ is constant for almost all $t \in [0,T]$. %
	Hence, adding this as a constraint to the optimal control problem, does not change the set of all minimizers (the minimizer). %
	
	If we now modify the optimal control problem by setting $c_2 = 0$ (but leaving $c_1$ as it is), %
	each admissible control function remains admissible and is assigned to an objective value, %
	which is upper bounded by its counterpart of the original OCP. In addition, we further relax the constraints by allowing arbitrary $\mathcal{L}^{\infty}$-functions, see Corollary~\ref{CorollaryConstantControlEffort}. Clearly, the set of minimizers may change. %
	So far, we get the relation
	\begin{align*}
		\int_0^\delta \ell(\mathbf{x}(t;\hat{\mathbf{x}},u^\star),\mathbf{u}^\star(t))\,\mathrm{d}t \geq 
		\frac {\delta \widetilde{V}(\hat{\mathbf{x}})} T
	\end{align*}
	for the optimal control of the original OCP and the optimal value~$\widetilde{V}(\hat{\mathbf{x}})$ of the modified optimal control problem. %
	Then, we can estimate that the control effort is only decreasing for each admissible control function %
	if we replace the cost function by $\lambda_R \| u \|^2$ %
	where $\lambda_R$ denotes the smallest eigenvalue of the positive definite matrix~$R$. %
	Then, dropping the artificially introduced constraint on uniform control effort w.r.t.\ $\|\cdot\|_R^2$, we get
	\begin{align*}
		\int_0^\delta \ell(\mathbf{x}(t;\hat{\mathbf{x}},u^\star),\mathbf{u}^\star(t))\,\mathrm{d}t \geq %
		\frac {\delta \widetilde{V}(\hat{\mathbf{x}})}T \geq \frac {\lambda_r c_1 \delta \widetilde{\widetilde{V}}(\hat{\mathbf{x}})} T
	\end{align*}
	where $\widetilde{\widetilde{V}}(\hat{\mathbf{x}})$ denotes the minimal value of the OCP~\eqref{OcpMobileRobot} with stage costs $\|u(t)\|^2$. %
	Then, further reducing this value by using the simplified dynamics, %
	see Proposition~\ref{PropositionSimplifiedDynamics} in Appendix \ref{section:appendix} (and still using the notation~$\widetilde{\widetilde{V}}$ for the optimal value), %
	we have derived the Lyapunov inequality
	\begin{align}
		V(\mathbf{x}(\delta;\hat{\mathbf{x}},u^\star)) %
		& = V(\hat{x}) - \int_0^\delta \ell(\mathbf{x}(t;\hat{\mathbf{x}},u^\star),\mathbf{u}^\star(t))\,\mathrm{d}t \nonumber \\
		& \leq V(\hat{x}) - \left( \frac {\lambda_R c_1 \delta}{T^2} \right) \cdot \| \hat{\mathbf{x}} \|^2. \nonumber
	\end{align}
	Then, standard arguments, see, e.g.\ \cite{Sontag1998,GrunPann10} can be used to conclude the assertion.
\end{proof}

Next, we extend Proposition~\ref{PropositionNecessaryOptimalityCondition} %
to the case with additional control and state constraints as a preliminary step to show that also the assertions of Theorem~\ref{TheoremAsymptoticStability} remain valid. 
\begin{corollary}\label{CorollaryNecessaryOptimalityCondition}
	Let control constraints $g(\mathbf{u}) \leq \mathbf{0}$ %
	with $g: \mathbb{R}^m \rightarrow \mathbb{R}^p$ be given such that the set %
	$\{ \mathbf{u} \in \mathbb{R}^m: g(\mathbf{u}) \leq \mathbf{0} \}$ is %
	closed, convex, and contains the origin in its interior. %
	Then, if the control function~$u^\sharp$ exhibits neither uniform control effort, i.e.\ \eqref{PropertyUniformControlEffort}, %
	nor satisfies~$\| \mathbf{u}(t) \|_R^2 \geq r^\star$ for almost all $t \in [0,T]$ with the threshold value
	\begin{equation}
		r^\star := \inf \{ r \in \mathbb{R}_{>0} : 
		g(\mathbf{u}) \leq \mathbf{0} \text{ for all $\mathbf{u} \in \mathbb{R}^m$ with $\| \mathbf{u} \|_R^2 \leq r$} \}, 
		\label{DefinitionRstar}
	\end{equation}
	then the alternative proposed in Proposition~\ref{PropositionNecessaryOptimalityCondition} holds, i.e.~$u^\sharp$ is not optimal.
\end{corollary}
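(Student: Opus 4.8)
The plan is to reduce Corollary~\ref{CorollaryNecessaryOptimalityCondition} to Proposition~\ref{PropositionNecessaryOptimalityCondition} by checking that the control-swapping construction used there never violates the constraint $g(\mathbf{u}) \leq \mathbf{0}$ under the stated hypotheses. First I would recall the mechanism of the earlier proof: two adjacent pieces $u^{(1)}$ on $(t_0,t_1)$ and $u^{(2)}$ on $(t_1,t_2)$ with $\|u^{(2)}\|_R > \|u^{(1)}\|_R$ are replaced by $\alpha u^{(2)}$ (with $\alpha\in(0,1)$ close to $1$) on a lengthened interval and $\beta u^{(1)}$ (with $\beta>1$) on a shortened interval, where $\beta$ is tied to $\alpha$ via~\eqref{EquationKathrin}, so that $\beta\to 1$ as $\alpha\to 1$. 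The only new requirement is that the modified control values $\alpha u^{(2)}$ and $\beta u^{(1)}$ still lie in the feasible set $\{\mathbf{u}: g(\mathbf{u})\leq\mathbf{0}\}$.

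Next I would dispatch the two scalings separately. The value $\alpha u^{(2)}$ with $\alpha\in(0,1)$: since the feasible set is convex and contains the origin, it is star-shaped about $\mathbf{0}$, hence $\alpha u^{(2)}$ is feasible whenever $u^{(2)}$ is — this needs no smallness assumption on the controls. The value $\beta u^{(1)}$ with $\beta>1$ is the delicate one, since we are enlarging a control. Here I would use the negation of the second alternative in the statement: if $u^\sharp$ does not satisfy $\|\mathbf{u}(t)\|_R^2 \geq r^\star$ for almost all $t$, then on a positive-measure set the control effort drops below $r^\star$; combined with the assumed failure of uniform control effort, one can pick the two adjacent pieces so that the \emph{smaller} one, $u^{(1)}$, satisfies $\|u^{(1)}\|_R^2 < r^\star$. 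By the definition~\eqref{DefinitionRstar} of $r^\star$, any $\mathbf{u}$ with $\|\mathbf{u}\|_R^2 \leq r^\star$ is feasible, so there is a whole $\|\cdot\|_R$-ball around $u^{(1)}$ inside the feasible set; choosing $\beta$ close enough to $1$ (equivalently $\alpha$ close enough to $1$) keeps $\beta u^{(1)}$ in that ball, hence feasible. One should also note that the state constraint $\mathbf{x}(t)\in\mathbb{X}$ is untouched: the modified controls trace the \emph{same geometric paths} in state space by Identity~\eqref{PropertyRaute}, just reparametrized in time, and the endpoint $\mathbf{x}(t_2;\hat{\mathbf{x}},u^\sharp)$ is preserved, so the new control function is again in the admissible class.

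With feasibility of the perturbed control secured, the cost comparison is verbatim that of Proposition~\ref{PropositionNecessaryOptimalityCondition} — the inequality $(1-\alpha)t_2\|u^{(2)}\|_R^2 < t_1(\|u^{(2)}\|_R^2 - \|u^{(1)}\|_R^2)$ holds for $\alpha$ sufficiently close to $1$ — so the constructed $\bar u$ has strictly smaller objective, and $u^\sharp$ is not optimal. I expect the main obstacle to be the combinatorial selection step: arguing carefully that the simultaneous failure of~\eqref{PropertyUniformControlEffort} and of the near-maximal-effort condition really forces the existence of \emph{adjacent} pieces with $\|u^{(1)}\|_R < \|u^{(2)}\|_R$ and $\|u^{(1)}\|_R^2 < r^\star$. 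The subtlety is that non-uniformity only gives two pieces with distinct norms somewhere, not necessarily adjacent and not necessarily with the smaller one below $r^\star$; one likely needs to first reduce to adjacent pieces (if the control is not constant, some consecutive pair differs in $\|\cdot\|_R$) and then either the global minimum-effort piece is below $r^\star$ — in which case a neighbor has strictly larger effort — or rearrange the argument to scale down the larger piece alone, which is always feasible by convexity. Making this case distinction airtight is where the real work lies; everything else is inherited from the earlier proof.
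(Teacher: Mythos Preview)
Your proposal is correct and follows the same approach as the paper: the construction of Proposition~\ref{PropositionNecessaryOptimalityCondition} goes through because scaling down $u^{(2)}$ is harmless by star-shapedness, while scaling up $u^{(1)}$ is feasible precisely when that piece lies strictly inside the constraint set, for which $\|u^{(1)}\|_R^2 < r^\star$ suffices. The paper's own proof is a single sentence to this effect and does not spell out the combinatorial selection step you flag; your observation that one should pick the globally minimal-norm piece (which is necessarily below $r^\star$) and an adjacent piece of strictly larger norm is exactly the clean way to close that gap.
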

\begin{proof}
	The proof is a direct adaptation of the arguments used in the proof of Proposition~\ref{PropositionNecessaryOptimalityCondition} %
	since the proposed construction is still doable as long as there exists an interval, %
	on which the boundary of the control constraints is not yet active %
	(for which reaching/exceeding the threshold value~$r^\star$ is a necessary condition).
\end{proof}

\begin{remark}[State Constraints]
	Note that adding state constraints~$h(\mathbf{x}) \leq \mathbf{0}$ %
	with $h: \mathbb{R}^n \rightarrow \mathbb{R}^q$, $q \in \mathbb{N}$, %
	does not affect the assertions of Proposition~\ref{PropositionNecessaryOptimalityCondition} %
	and Corollary~\ref{CorollaryNecessaryOptimalityCondition} since we have shown the following in the respective proofs: %
	Each path in the $x_1$-$x_2$-plane remains feasible %
	but the optimal time parametrization w.r.t.\ the cost functional is attained only if the proposed necessary optimality condition holds. %
	Therefore, feasibility w.r.t.\ the state constraint set is maintained (the angle is also invariant on a given path).
\end{remark}

Also Theorem~\ref{TheoremAsymptoticStability} remains valid. %
The only changes needed in the proof are the following: %
Firstly, one has to argue that a certain minimal decrease is automatically achieved %
if the condition $\| \mathbf{u}^\star(t) \|_R^2 \geq r^\star$ %
with $r^\star$ defined by \eqref{DefinitionRstar} holds for almost all $t \in [0,T]$. %
Furthermore, dropping the control and state constraints before Proposition~\ref{PropositionSimplifiedDynamics} (see Appendix) is applied, %
leads to the same lower bound and is, thus, doable. Furthermore, note that all results presented in this section also hold if  control functions of class~$\mathcal{L}^{\infty}$ are used instead of $\mathcal{U}^S_T(\cdot)$.
\begin{example} \label{exp:uniformEffort}
We consider again the mobile robot example with states $x_1,x_2,x_3$ and define a control problem from initial state $\hat{\mathbf{x}} =( 0.1,\,1.0,\,0.8)$ to final state $\mathbf{x}^\star = (0,\,0,\, 0)$. Stage costs are defined as 
\[ \ell(\mathbf{x},\mathbf{u}) = 4 u_1^2+u_2^2-3u_1\cdot u_2 + 0.1 \sqrt{u_1^2+u_2^2}. \]
 We transform from Lagrange to Bolza form by introducing a new state $x_4$ and a second auxiliary state $x_5$ by
 \begin{align}
 	\dot{x}_4 & = 4 u_1^2+u_2^2-3u_1\cdot u_2 + 0.1 \sqrt{u_1^2+u_2^2}, \quad x_4(0) = 0 \\
 	 \dot{x}_5 & = 4 u_1^2+u_2^2-3u_1\cdot u_2 , \quad x_5(0) = 0.
 \end{align}
 Then, the cost function is $J = x_4(T)$ and we fix $T=50$.
 A solution is computed numerically by the graphical interface \emph{WORHP Lab} of the optimization software \emph{WORHP} \cite{buskens2012esa,knauer}.
 The robot dynamics are transcribed by the trapezoidal rule on an equidistant time grid with $50$ time points.
  An optimal solution is found after 56 outer-loop iterations of an SQP method and resulting costs are $J= 0.5141$.
  In Figure~\ref{fig:ExampleControlEffort}, the optimal trajectories and controls are shown.
  Our focus is on auxiliary state $x_5$: The optimal control satisfies
  $\Vert \mathbf{u}(t) \Vert_R^2 = \text{const.}$ for all $t \in [0,T]$, which illustrates the result of Proposition~\ref{PropositionNecessaryOptimalityCondition}.
  Thus, the quadratic part of the control effort increases linearly with time.
  Note, however, that the integrated stage cost, i.e.\  $x_4(t)$, does not increase linearly, nor do the individual controls.
\end{example}
\begin{figure}[htb]
	\centering
	\includegraphics[width=.9\textwidth]{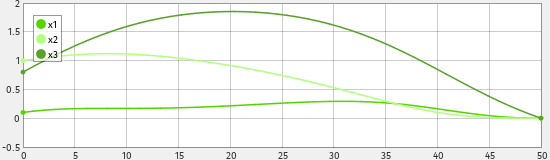}\\[.2cm]
	\includegraphics[width=.9\textwidth]{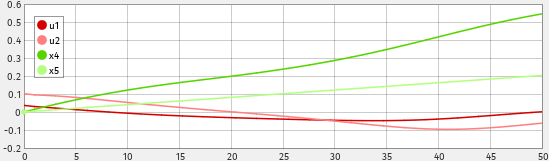}
	 \caption{Optimal solution for the mobile robot as defined in Example~\ref{exp:uniformEffort}: As it has been shown in Proposition~\ref{PropositionNecessaryOptimalityCondition}, the quadratic part of the optimal control effort is uniformly distributed, i.e. $x_5$ increases linearly, although $x_4$ and $u_{1}$, $u_{2}$ do not. 
	 }
	 \label{fig:ExampleControlEffort}
\end{figure}

\subsection{Energy and Fuel Consumption for Finite Sets of Motion Primitives} \label{seq:MPSetting}

Let us now focus on the motion primitives setting. %
That is, we further restrict $\mathbb{U}$ in the definition of the set of admissible control functions~\eqref{eq:admissibleControlFunctions}.
Feasible control values have to belong to a finite set $\{\mathbf{u}^{(1)},\ldots,\mathbf{u}^{(M)}\} \subset [-\bar{\mathbf{u}},\bar{\mathbf{u}}]$, $M \in \mathbb{N}$. Moreover, in accordance with our existence results, see Lemmata~\ref{LemmaAdmissibility} and \ref{lemma:optimum}, we assume that $\mathbf{u}^{(i)} = \mathbf{0}$ holds if and only if $i = 1$.

To this end, let us observe that, for $\hat{x} \in \mathbb{X} \setminus \{\mathbf{0}\}$, %
there exists at least one interval with non-zero control in view of the terminal equality condition. %
Then, we reorder the optimal control such that all indices 
\begin{align*}
	\{ i \in \{1,2,\ldots,S\} : u|_{[t_{i-1},t_i)} \equiv 0\}
\end{align*}
are shifted to the end of the sequence, which can be done without loss of optimality. %
Using this additional condition, we know that we either reach the origin within the sampling interval $[0,\delta)$ or use a control function, %
which is non-zero for each $t \in [0,\delta)$.  %
The latter, however, implies
\begin{align*}
	\int_0^\delta \ell(\mathbf{x}(t;\hat{\mathbf{x}},u^\star_{\hat{\mathbf{x}}}),\mathbf{u}^\star_{\hat{\mathbf{x}}}(t))\, \mathrm{d}t \geq \delta \min_{j \in \{2,\ldots,M\}} c_1 \| \mathbf{u}^{(j)} \|_R^2 + c_2 \vertiii{ \mathbf{u}^{(j)} } =: \tilde{c} \,  > 0,
\end{align*}
which ensures a decrease of at least $\tilde{c}$ in each MPC step. Since $V(\mathbf{x}^0)$ is finite, we get finite time convergence.

\subsection{Penalization of the Process Time}\label{SubsectionMinimalTime}

In this subsection, we present a convergence proof for stage costs, in which the process time is penalized. %
While convergence is clear, %
MPC may contribute to further reduce the costs while still ensuring finite time convergence as shown in the following proposition.
\begin{proposition}
	Consider the OCP~\eqref{NotationOcpMinimalTime} with $c_3 > 0$ and let $\mathbf{x}^0 \in \mathbb{X} \setminus \{\mathbf{0}\}$ be given. If the OCP~\eqref{NotationOcpMinimalTime} is initially feasible, i.e.\ if there exists a time~$T$ and a control function~$u$, $u \in \mathcal{U}_{T}^{S}$, the MPC closed-loop trajectory is well-defined. Moreover, there exists a time~$T^\sharp$, $T^\sharp \in (0,\infty)$, such that $\mathbf{x}^{\operatorname{MPC}}(T^\sharp;\mathbf{x}^0) = \mathbf{0}$ holds.
\end{proposition}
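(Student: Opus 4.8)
The plan is to establish three things in sequence: well-definedness of the MPC closed loop (recursive feasibility), a uniform lower bound on the cost decrease per step, and finiteness of the total number of steps. The first point is immediate: since the OCP~\eqref{NotationOcpMinimalTime} carries the terminal equality constraint $\mathbf{x}(T) = \mathbf{0}$, the standard argument (see, e.g.\ \cite{Keerthi1988}) shows that feasibility at the current state implies feasibility at the successor state --- one simply takes the tail of the previously optimal trajectory and, since the zero control value $\mathbf{u}^{(1)} = \mathbf{0}$ is admissible (cf.\ Lemma~\ref{LemmaAdmissibility}), prolongs it by resting at the origin. Hence, given initial feasibility, $\mathbf{x}^{\operatorname{MPC}}(t_i;\mathbf{x}^0)$ is defined for all $i$, and by Lemma~\ref{lemma:optimum} the minimizing pair $(T^\star, u^\star)$ exists at each step.

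Next I would distinguish the two cases that arise in each MPC step at a state $\hat{\mathbf{x}} \neq \mathbf{0}$. Either the optimal horizon satisfies $T^\star \leq \delta$, in which case $\mathbf{x}^{\operatorname{MPC}}(t_{i+1};\mathbf{x}^0) = \mathbf{0}$ and we are done in finitely many steps trivially; or $T^\star > \delta$, in which case $\delta$ units of the optimal control are implemented. In the latter case, reorder the optimal control (without loss of optimality, exactly as in Subsection~\ref{seq:MPSetting}) so that all subintervals carrying the zero control value are pushed to the end of the sequence; since the terminal constraint forces at least one nonzero trim and the origin has not been reached within $[0,\delta)$, the implemented control $u^\star|_{[0,\delta)}$ is nonzero almost everywhere. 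Because $c_3 > 0$ is part of the stage cost, every nonzero subinterval of length $\delta'$ contributes at least $c_3 \delta$ over the sampling interval; even without $c_1, c_2$ one obtains
\[
	\int_0^\delta \ell(\mathbf{x}(t;\hat{\mathbf{x}},u^\star),\mathbf{u}^\star(t))\,\mathrm{d}t \geq c_3 \delta > 0.
\]
By the dynamic programming principle, $V(\mathbf{x}^{\operatorname{MPC}}(t_{i+1};\mathbf{x}^0)) = V(\hat{\mathbf{x}}) - \int_0^\delta \ell \,\mathrm{d}t \leq V(\hat{\mathbf{x}}) - c_3 \delta$, so the value function decreases by at least $c_3 \delta$ in every step in which the origin is not reached.

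Finally, I would close the argument by a telescoping/contradiction step: if the origin were never reached, the inequality $V(\mathbf{x}^{\operatorname{MPC}}(t_i;\mathbf{x}^0)) \leq V(\mathbf{x}^0) - i \cdot c_3 \delta$ would force $V$ to become negative for $i$ large, contradicting $V \geq 0$ (the stage cost is nonnegative, cf.\ the definition of~\eqref{NotationOcpMinimalTime}). Hence there is a finite index $i^\sharp$ with $\mathbf{x}^{\operatorname{MPC}}(t_{i^\sharp};\mathbf{x}^0) = \mathbf{0}$; setting $T^\sharp := t_{i^\sharp} \in (0,\infty)$ yields the claim, and the closed-loop state remains at $\mathbf{0}$ thereafter by feasibility of the zero control. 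The only mildly delicate point is the reordering step guaranteeing that the implemented piece is genuinely nonzero; this is where one must invoke the terminal equality constraint together with the assumption $\mathbf{u}^{(i)} = \mathbf{0} \iff i = 1$, and it is essentially the same bookkeeping already carried out in Subsection~\ref{seq:MPSetting}, so I expect no real obstacle --- the proof is short precisely because the terminal constraint does the heavy lifting for both feasibility and the decrease estimate.
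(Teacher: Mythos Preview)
Your proof is correct and follows the same skeleton as the paper's: recursive feasibility from the terminal equality constraint, a uniform per-step decrease of $V$ by at least $c_3\delta$, and a telescope/contradiction to force finite-time arrival. Two small points are worth noting.

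First, the reordering detour you import from Subsection~\ref{seq:MPSetting} is unnecessary here and slightly misplaced. Because $c_3>0$ enters the stage cost additively, one has $\ell(\mathbf{x},\mathbf{u})\geq c_3$ for \emph{every} control value, including $\mathbf{u}=\mathbf{0}$; hence $\int_0^{\Delta t}\ell\,\mathrm{d}t\geq c_3\,\Delta t$ holds with no need to argue that the implemented piece is nonzero. The paper exploits exactly this, which is why its proof is a few lines shorter. Your invocation of the hypothesis ``$\mathbf{u}^{(i)}=\mathbf{0}\iff i=1$'' belongs to the finite motion-primitive setting of Subsection~\ref{seq:MPSetting} and is not assumed in the present proposition, so it is better to drop that step entirely.

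Second, the line ``$V(\mathbf{x}^{\operatorname{MPC}}(t_{i+1};\mathbf{x}^0)) = V(\hat{\mathbf{x}}) - \int_0^\delta \ell\,\mathrm{d}t$'' should carry $\leq$ rather than $=$: the tail of the current optimal control is merely feasible, not necessarily optimal, at the successor state. The inequality points the right way for your argument, so this is only a cosmetic fix.
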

\begin{proof}
	We use the abbreviations $\hat{\mathbf{x}} := \mathbf{x}^{\operatorname{MPC}}(t_{i};\mathbf{x}^0)$ and $\Delta t := t_{i+1}-t_i$. Recursive feasibility can be directly concluded from the admissibility of the shifted control sequence $u^\star_{\hat{\mathbf{x}}}(\cdot + \Delta t)$ at the successor time instant. Moreover, we have
	\begin{align*}
		V(\hat{\mathbf{x}}) & = \int_0^{\Delta t} \ell(\mathbf{x}(t;\hat{\mathbf{x}},u^\star_{\hat{\mathbf{x}}}),\mathbf{u}^\star_{\hat{\mathbf{x}}}(t))\, \mathrm{d}t + J_{T^\star_{\hat{\mathbf{x}}}-\Delta t}(\mathbf{x}(\Delta t;\hat{\mathbf{x}},u^\star_{\hat{\mathbf{x}}}),u^\star_{\hat{\mathbf{x}}}(\cdot+\Delta t)) \\
		& \geq c_3 \Delta t + V(\mathbf{x}(\Delta t;\hat{\mathbf{x}},u^\star_{\hat{\mathbf{x}}}))
	\end{align*}
	If there exists an index~$i$ such that $V(\mathbf{x}^{\operatorname{MPC}}(t_i;\mathbf{x}^0))=0$ holds, we are done. Otherwise, taking into account that $V$ is positive definite, we get
	\begin{align*}
		V(\mathbf{x}^{\operatorname{MPC}}(t_i;\mathbf{x}^0)) \leq V(\mathbf{x}^0) - i \delta c_3
	\end{align*}
	using a telescope sum argument. Then, for $i \rightarrow \infty$, the term $i \delta c_3$ grows unboundedly, which implies that the right hand side becomes smaller than zero for sufficiently large $i$ (e.g., $i := \lceil V(\mathbf{x}^0) (\delta c_3)^{-1} \rceil$) --- a contradiction. 
\end{proof}

\subsection{MPC without Terminal Constraint and Outlook}\label{sec:qucosts}

Here, we want to highlight that motion primitives (or trims if the terminology is supposed to be solely adapted to the example of the mobile robot) were already used in~\cite{Wort2016CST} and the follow-up paper~\cite{Wort2015NMPC} to rigorously ensure asymptotic stability for the setting in which neither terminal constraints nor terminal costs were used. The key idea was to derive the controllability assumption initially proposed by Tuna et al.\ in~\cite{TunaMessinaTeel2006} in combination with the suboptimality estimates from~\cite{GrunPann10}, see also~\cite{Reble_Allgower_2012} and~\cite{Wort2014SICON} for the extension to the continuous-time setting. In~\cite{Wort2016CST}, bounds on the value function in dependence of the initial condition were deduced by using the simple sequence \textit{turn-move-turn} as explicated in Subsection~\ref{SubsectionMobileRobotOCP}. A key element was to use a parametric representation of the solution trajectory, which nicely corresponds to the explanation provided in Section~\ref{sec:Preliminaries}. 

In conclusion, combining the blueprint outlined in~\cite{Wort2015NMPC,Wort2016CST} and the wording and deeper insight in the use of motion primitives to quantize the nonlinear system dynamics seems to be a very promising approach to tackle systems, for which the linearization does not contain sufficient information to fulfill the stabilization task. Here, it is worth mentioning that purely quadratic costs do, in general, not work for the example of the mobile robot, see~\cite{MullWort17}. %
Moreover, the proposed symmetry exploiting technique can also be used to verify initial feasibility, to characterize a set of initially feasible states, and to rigorously treat obstacle avoidance problems.

\section{Predictive Control based on Trim Primitives: Numerical Results}\label{sec:numerics}

Numerical results for the mobile robot example are shown to illustrate the effect of quantizing the set of control values to trim primitives.

\subsection{Quantization of the Set of Feasible Control Values}\label{SubsectionSimpleExample}

Let the initial value $\mathbf{x}^0 = (-2,0,0)^\top$, the terminal set $\mathbb{X} = \{ (0,0,0)^\top \}$, and the stage cost $\ell(\mathbf{x},\mathbf{u}) = \| \mathbf{u} \|^2$ be given. Moreover, we use the set
\begin{align}\nonumber
	\mathbb{U} = \left\{ \mathbf{u} = \left(\begin{array}{c} u_1 \\ u_2 \end{array}\right) \in [-2,2]^2\ \left|\ \exists (j,k) \in \mathbb{Z}^2 \text{ : } \mathbf{u} = \Delta u \left( \begin{array}{c} j \\ k \end{array}\right) \right. \right\}\\
\end{align}
with $\Delta u=0.1$. %
Then, the minimal optimization horizon~$T$ such that initial feasibility is ensured is $T = 1$. In the following, we use the time shift $\delta = 0.1$. Moreover, if the optimal control function is not unique, we choose a sequence with maximal costs on the interval~$[0,\delta)$:
\begin{enumerate}
	\item At time $t = 0$, we get $u^\star \equiv (2, 0)^\top$ and $V(\mathbf{x}^0) = 4$. Hence, we have $\mathbf{x}^{\text{MPC}}(\delta;\mathbf{x}^0) = \mathbf{x}(\delta;\mathbf{x}^0,u^\star) = (-1.8,0)^\top$ and the closed-loop costs given by $\int_0^\delta \ell(\mathbf{x}(t;\mathbf{x}^0,u^\star),\mathbf{u}^\star(t))\,\mathrm{d}t = 0.4$.
	\item At time $t = \delta$, we get $u^\star = u^\star(\mathbf{x}^{\text{MPC}}(\delta;\mathbf{x}^0)) \equiv (1.8, 0)^\top$, $x^{\text{MPC}}(2 \delta;\mathbf{x}^0) = (-1.62,0)^\top$, and $V(\mathbf{x}^{\text{MPC}}(\delta;\mathbf{x}^0)) = 3.24$. Hence, we have reduced the overall costs from $4.00$ to $0.4 + 3.24 = 3.64$. The closed-loop costs on $[0,2\delta)$ are $0.4+0.324=0.724$.
	\item At time $t = 2 \delta$, we get $\mathbf{u}^\star(t) = (1.7,0)^\top$ on $[0,0.2)$ and $\mathbf{u}^\star(t) = (1.6,0)^\top$ for $t \in [0.2,1)$ (using our convention since $u^\star$ is not unique). 
\end{enumerate}	
The following values are summarized in Table~\ref{tab:SimpleExample}.
\begin{table}[htb]
	\begin{tabular}{|c||c|c|c|c|c|c|}\hline
		$i$ & $t = i\delta$ & $\mathbf{x}^{\text{MPC}}_1(t;\mathbf{x}^0)$ & $u_1^\star(0)$ & $V(x^{\text{MPC}}(t;\mathbf{x}^0))$ & $\ldots + \int_0^{t} u^{\text{MPC}}(t)^2\,\mathrm{d}t$ \\ \hline\hline
		0 & 0.0 & -2.00 & 2.0 & 4.00 & 4.000 \\ 
		1 & 0.1 & -1.80 & 1.8 & 3.24 & 3.640 \\
		2 & 0.2 & -1.62 & 1.7 & 2.626 & 3.350 \\
		3 & 0.3 & -1.45 & 1.5 & 2.105 & 3.118 \\ 
		4 & 0.4 & -1.30 & 1.3 & 1.690 & 2.928 \\
		5 & 0.5 & -1.17 & 1.2 & 1.371 & 2.778 \\
		6 & 0.6 & -1.05 & 1.1 & 1.105 & 2.656 \\
		7 & 0.7 & -0.94 & 1.0 & 0.886 & 2.558 \\
		8 & 0.8 & -0.84 & 0.9 & 0.708 & 2.480 \\
		9 & 0.9 & -0.75 & 0.8 & 0.565 & 2.418\\
		10 & 1.0 & -0.67 & 0.7 & 0.451 & 2.368 \\	
		\vdots & \vdots & \vdots & \vdots & \vdots & \vdots \\
		20 & 2.0 & -0.21 & 0.3 & 0.045 & 2.192\\
		\vdots & \vdots & \vdots & \vdots & \vdots & \vdots \\
		35 & 3.5 & \phantom{-}0.00 & 0.0 & 0.000 & 2.182\\ \hline
	\end{tabular}
	\caption{Development of the value function and the MPC closed-loop costs for the example presented in Subsection~\ref{SubsectionSimpleExample}.}
	\label{tab:SimpleExample}
\end{table}

The closed-loop cost for reaching the origin are significantly less than the costs associated to the OCP at time $t=0$ ($2.182$ in comparison to $4.000$). The reason is that the control effort is constantly reduced by using MPC since there is some additional freedom to satisfy the terminal equality constraint after each MPC iteration. If a coarser discretization, e.g.\ $\Delta u = 0.5$, and --~as a consequence~-- a smaller trim library is used, the origin is reached after $20$ steps and the closed-loop costs are $3.000$. In conclusion, there is a trade-off between the numerical effort for solving the combinatorial OCP online (which is drastically increasing for a refined quantization) and the closed-loop performance.

\subsection{Optimal Control with Trim Primitives}\label{sec:numericsOCP}

We consider again the dynamics of the mobile robot, cf.\ Example~\ref{ExampleMobileRobot}, to illustrate qualitatively different optimal solution built of trim primitive sequences.
The parallel parking problem from $x^0 = (0,1,0)^{\top}$ to $x^\star = (0,0,0)^{\top}$ shall be solved in $T=8.0$ time units by optimization w.r.t.\ various cost functionals.

We restrict to the library of trim primitives as given in Table~\ref{tab:primitives}, stored as tuples $(u_1,u_2)^{\top}$ and we consider sequences with at most 4 switches.
Since algorithmic performance is not in the focus of this work, we globally search through all possible combinations of trim primitives and compute the optimal switching times in each case for which a solution can be found.
Note that sequences with fewer switches can be found since two succeeding primitives might be identical.
The rest trim plays an important role so that solutions which would not need $T=8$ time units can be prolonged so that they become feasible.

\begin{table}
\centering
\begin{tabular}{|l|l|l|}
\hline
No. & $(u_1,u_2)^{\top}$ & Trim primitive\\
\hline
1 & $(0,0)^{\top}$ & rest \\
2 & $(1.5,0)^{\top}$ & move straight \\
3 & $(-1/4,-1)^{\top}$ & circle clockwise \\
4 & $(-1/4,1)^{\top}$ & circle anti-clockwise \\
5 & $(0,1)^{\top}$ & turn on the spot\\
\hline
\end{tabular}
\caption{Library of trim primitives for numerical tests in Section~\ref{sec:numericsOCP}.}
\label{tab:primitives}
\end{table}

\begin{figure}
\includegraphics[height=5.5cm]{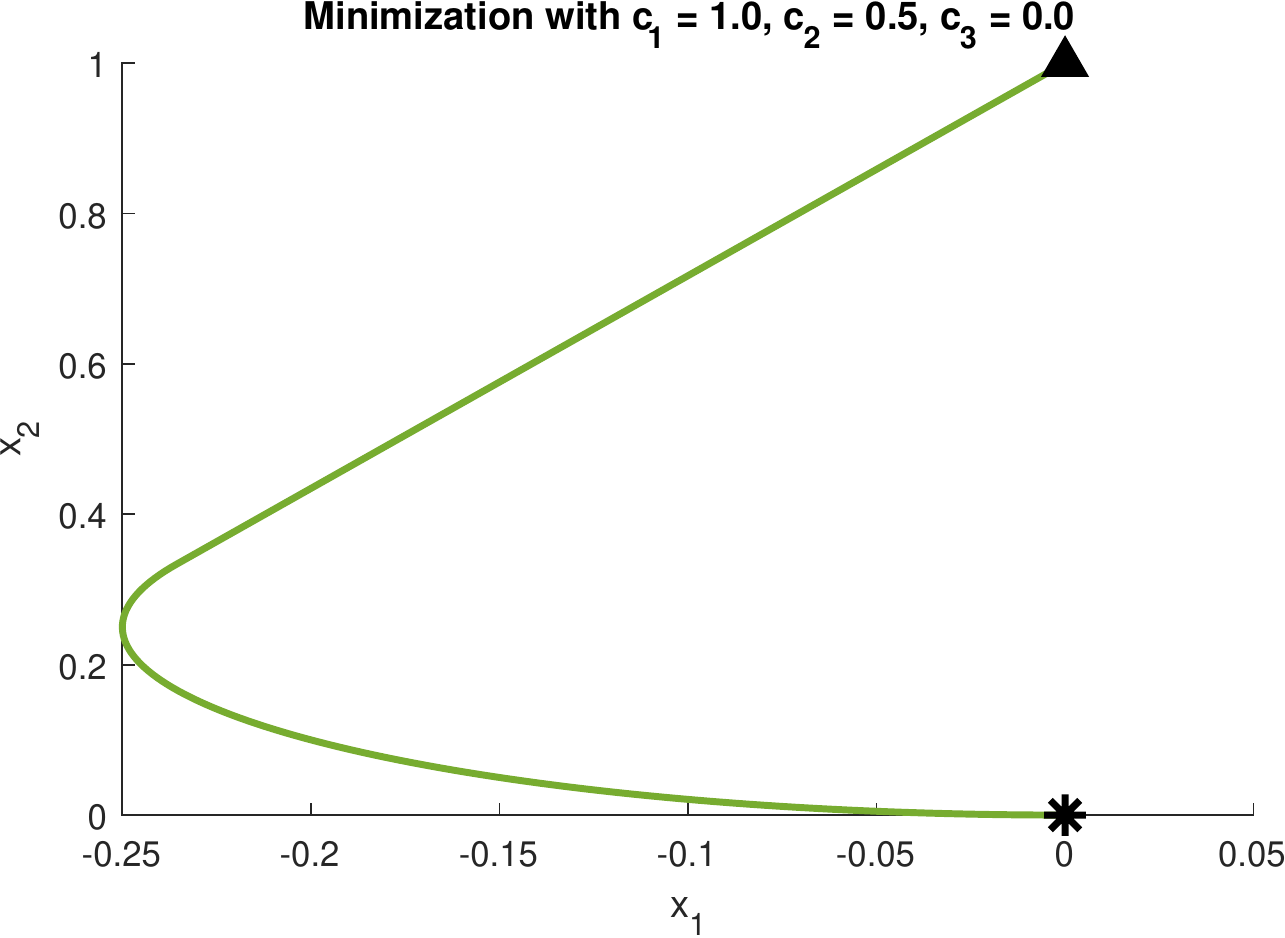}
\includegraphics[height=5.5cm]{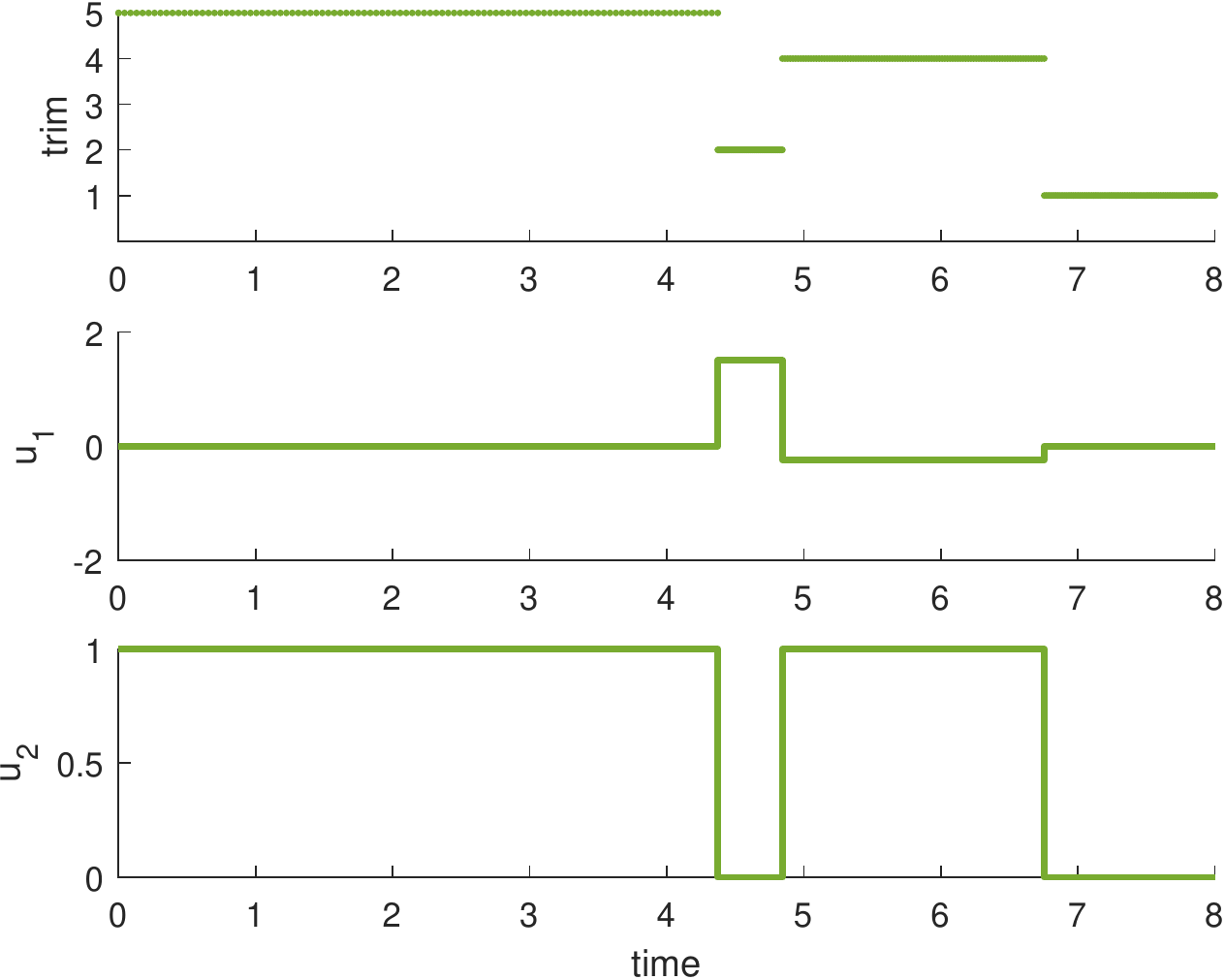}
\caption{OCP solution for the parallel parking problem of the mobile robot in $T=8$ while minimizing $\ell = \vert| u \vert|^2_I + \frac{1}{2} \vert| u \vert|_I$ for the best sequences of trim primitives obtained from a global search on possible sequences.}
\label{fig:numericsBestSolution}
\end{figure}

\begin{figure}
\includegraphics[height=5.5cm]{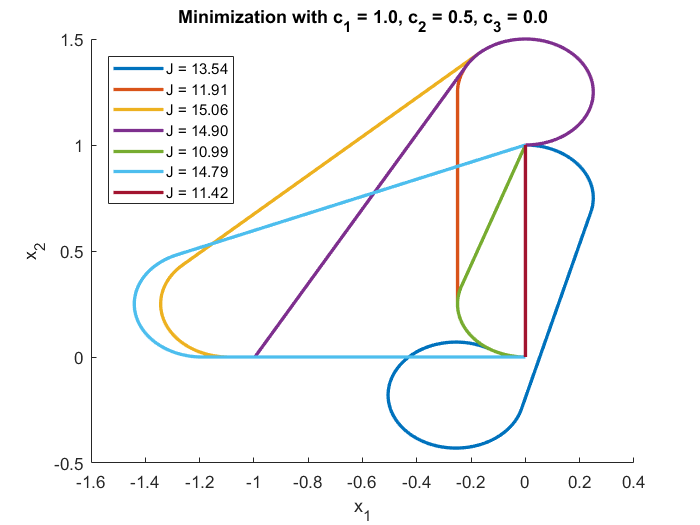}
\includegraphics[height=5.5cm]{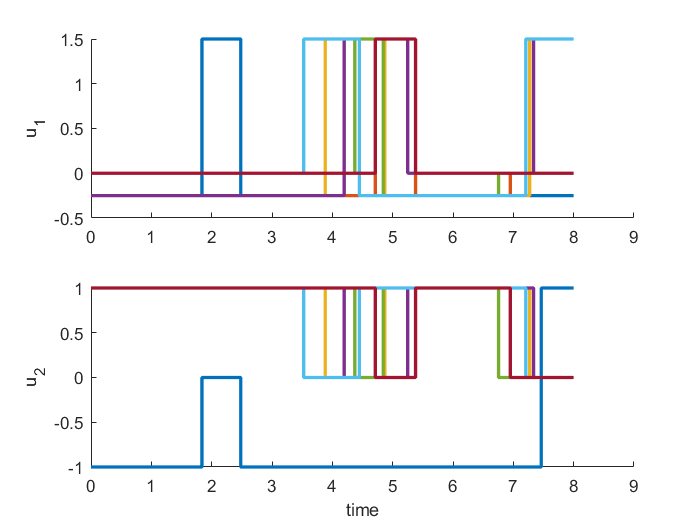}
\caption{OCP solutions for the parallel parking problem of the mobile robot in $T=8$ while minimizing $\ell = \vert| u \vert|^2_I + \frac{1}{2} \vert| u \vert|_I$ for different sequences of trim primitives.}
\label{fig:numericsL1L2}
\end{figure}

We show the solutions for minimizing the cost
$ \ell(\mathbf{x},\mathbf{u}) = \Vert \mathbf{u} \Vert^2_I + 0.5 \cdot \Vert  \mathbf{u} \Vert_2$.
The best solution is given in Figure~\ref{fig:numericsBestSolution}.
It uses the trim sequence $(5,2,4,1)$ (cf.\ Table~\ref{tab:primitives}), as can be seen in the right top picture.
Alternative solutions, also with at most 4 switches, are given in 
Figure~\ref{fig:numericsL1L2}.
When searching for time minimal solutions, the results depicted in Figure~\ref{fig:numericsTime} are obtained.
The control curves code the switching sequence of the solution.
Note that the state plot is a projection to the $(x_1,x_2)$-plane, i.e.\ non-smooth turns (edges) in trajectory have in fact a turning phase, such that the mobile robot does fulfill its nonholonomic constraints.

It can be seen that even a small library of primitives can generate different types of solutions.
In Figure~\ref{fig:numericsL1L2}, the red solution is the \emph{turn-move-turn} sequence.
However, there exists a solution with lower costs (green).
Other solutions have much higher costs and would not be chosen in the unconstrained scenario.
However, they might become of importance as soon as obstacle avoidance is included in the problem.

In Figure~\ref{fig:numericsTime}, one can see that the optimal time is depending on the considered sequence of trim primitives.

\begin{figure}
\includegraphics[height=5.5cm]{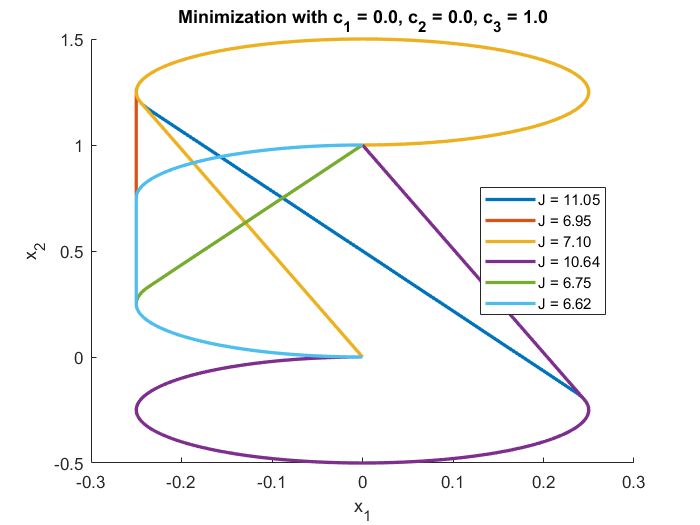}
\includegraphics[height=5.5cm]{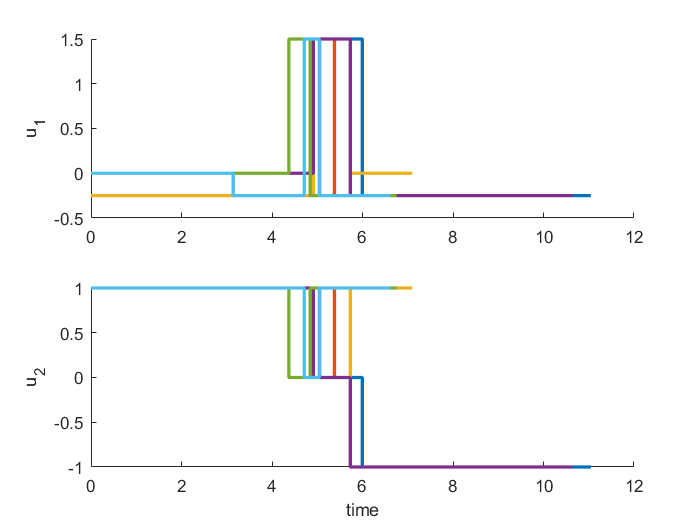}
\caption{Time optimal solution for the parallel parking problem of the mobile robot for different sequences of trim primitives.}
\label{fig:numericsTime}
\end{figure}

\subsection{MPC with Trim Primitives}\label{sec:numericsMPC}

We keep the library of trim primitives which was chosen    in the previous subsection to solve the parallel parking problem by an MPC scheme now.
Here we compute $12$ MPC steps and set $\delta=1$.
Exemplarily, we show the solution for minimizing $\ell = |u|^2$ with fixed horizon $T=8$ in every MPC step in Figure~\ref{fig:numericsMPC}.
The MPC scheme is able to stabilize the system in $8$ time instances.
In every MPC step, the cost decreases.
Additionally, at $t=2$ and $t=3$ a \emph{replanning} occurs, i.e.\ another sequence of motion primitives becomes more efficient than the old solution. (Note that a previous solution can always be prolonged to a valid new solution with the help of the rest trim.)
In the future, we would like to investigate the interplay between quantization and closed-loop performance.
Larger libraries tend to higher computationally costs but potentially to a better closed-loop performance.
Thus, one is interested in the trade-off of these two conflicting optimization goals.

\begin{figure}
\includegraphics[height=5.5cm]{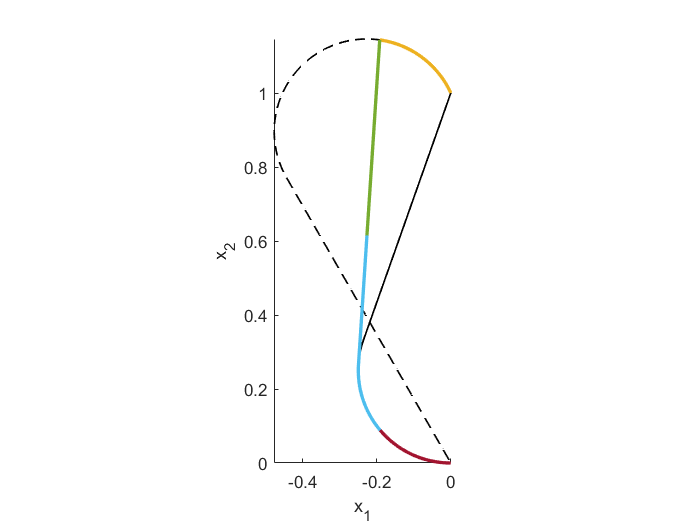}
\includegraphics[height=5.5cm]{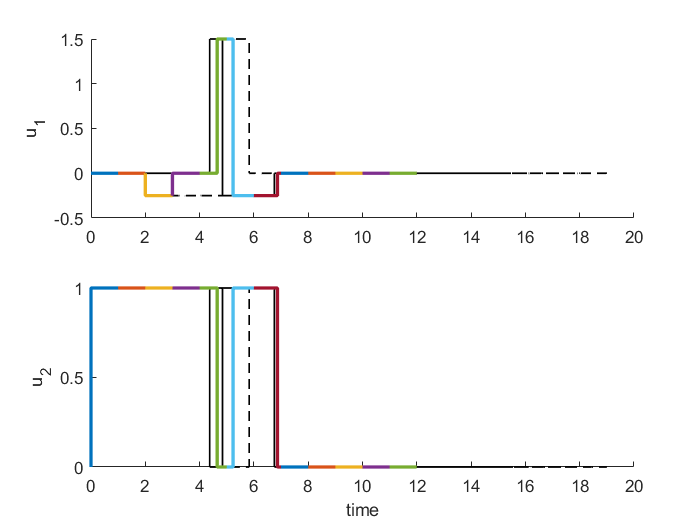}\\
\includegraphics[height=5.5cm]{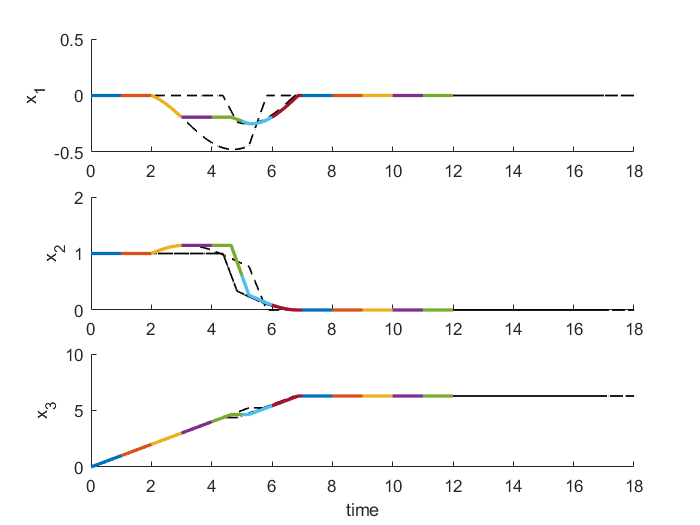}
\includegraphics[height=5.5cm]{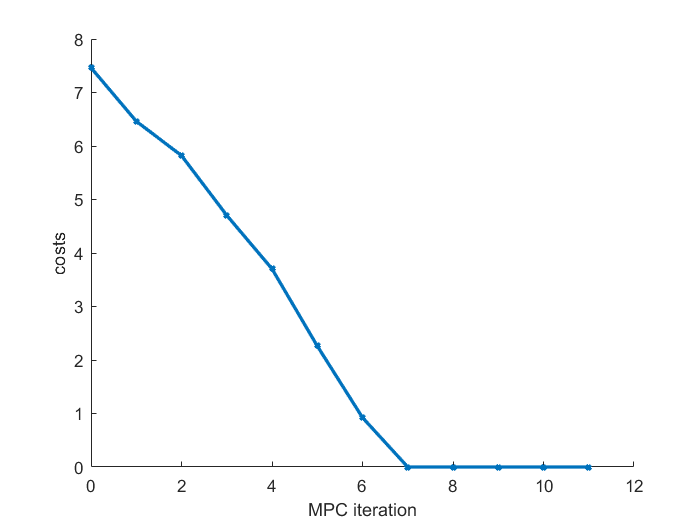}
\caption{MPC solution for the parallel parking problem of the mobile robot in $T=8$ with MPC steps of $\delta =1$. After six iterations, the final point is reached. The optimal sequence of trim primitives is replanned at $t=2$ and $t=3$.}
\label{fig:numericsMPC}
\end{figure}

\section{Conclusions}\label{sec:conclusions}

We propose to exploit inherent symmetries by using motion primitives in the design, analysis, and numerical treatment of MPC schemes. %
The key advantage in doing so is that trajectories for genuine nonlinear systems can be easily represented by one-parameter groups. %
W.r.t.\ the design and analysis of MPC schemes, we in particular re-interpreted the results presented in~\cite{Font01,GuHu05} (with terminal constraints and costs) and \cite{Wort2016CST} (without terminal constraints and costs) to lay the foundation for their generalization to a larger class of systems. %
Hereby, it is important to check consistency of stage costs and constraints w.r.t.\ the symmetries of the system dynamics. %
Regarding algorithmic aspects, the quantization of the state space by choosing tailored maneuvers encoded by motion primitives is an essential and helpful concept to encounter, on the one hand, the curse of dimensionality in dynamic programming. %
On the other hand, nonlinear MPC is typically realized via local optimization methods. Thus, getting stuck in local optima is a common problem which can be circumvented by finding (approximations to) alternative solutions via globally searching on a motion primitive graph, cf.\ e.g. \cite{KaraFraz11}.
A potential next step to further enhance the compatability of motion primitives and MPC is to take tailored numerical techniques, see, e.g.\ \cite{KalaGupt17}, into account.

We studied the representative example of the mobile robot in depth in order to illustrate our findings. 
Here, we derived new necessary optimality conditions for the open-loop OCP and provided numerical simulations to shed some light on the trade-off between performance and numerical effort, which corresponds to setting up a suitable library of motion primitives and solving the respective mixed integer OCP.

In conclusion, motion primitives seem to be a (very) promising approach to systematically verify stability conditions like cost controllability as outlined in \cite{Wort2015NMPC} without using the proper wording. Furthermore, trims correspond to inherent optimality/turnpike properties of trims, which are useful for the structural analysis of optimal control problems, see, e.g.\ \cite{FaulFlas19}. Moreover, the proposed combination of motion primitives and MPC is also beneficial if (moving) obstacles or potentially non-convex constraints have to be considered as, e.g., in distributed MPC and for the efficient construction of alternative solutions in order to avoid local minima in the numerical solution of the OCP to be solved in each MPC step.

\appendix
\section*{Appendix}
\section{OCP with Simplified Dynamics}\label{section:appendix}

In this section, we consider an auxiliary OCP, which is needed in order to prove our main result Theorem~\ref{TheoremAsymptoticStability}. The auxiliary OCP is constrained by the system dynamics~\eqref{NotationMobileRobot}, in which the $x_3$-dependence of the first two components is replaced by the additional control~$u_3$. Moreover, $u_3$ is not penalized in the objective function.
\begin{proposition}\label{PropositionSimplifiedDynamics}
	We consider the optimal control problem
	\begin{equation*}
		\text{Minimize}\quad \int_0^T u_1(t)^2 + u_2(t)^2\,\mathrm{d}t
	\end{equation*}
	w.r.t. $u \in \mathcal{L}^\infty([0,T],\mathbb{R}^3)$ subject to the boundary conditions $\mathbf{x}(0) = \hat{\mathbf{x}}$, $\mathbf{x}(T) = 0$ and, for almost all~$t \in [0,T]$, the differential equation
	\begin{equation*}
		\dot{\mathbf{x}}(t) = \begin{pmatrix}
			\cos (u_3(t)) \\ \sin (u_3(t)) \\ 0
		\end{pmatrix} u_1(t) + \begin{pmatrix}
			0 \\ 0 \\ 1
		\end{pmatrix} u_2(t)
	\end{equation*} 
	Then, the value function $\tilde{\tilde{V}}(\hat{x}): \mathbb{R}^3 \rightarrow \mathbb{R}$ is given by the positive definite function
	\begin{equation}
		\tilde{\tilde{V}}(\hat{x}) = \| \hat{x} \|^2 / T. \nonumber
	\end{equation}
\end{proposition}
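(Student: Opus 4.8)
The plan is to show that the simplified dynamics decouple into three independent single-integrator problems and then apply a standard Cauchy--Schwarz argument to identify the value function.

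\textbf{Step 1: Reformulation of the control.} I would substitute $v_1(t) := \cos(u_3(t))\,u_1(t)$ and $v_2(t) := \sin(u_3(t))\,u_1(t)$. Then the first two state equations become $\dot{x}_1 = v_1$, $\dot{x}_2 = v_2$, and, crucially, the cost integrand satisfies $u_1(t)^2 = v_1(t)^2 + v_2(t)^2$ since $\cos^2 + \sin^2 \equiv 1$; moreover $u_3$ does not appear in the objective. Conversely, any essentially bounded measurable pair $(v_1,v_2)$ is realized by $u_1 := \sqrt{v_1^2+v_2^2}$ and $u_3 := \operatorname{atan2}(v_2,v_1)$ (with $u_3 := 0$ on the null set where $(v_1,v_2) = \mathbf 0$), which are again essentially bounded and measurable. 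Hence the OCP is equivalent to minimizing $\int_0^T \|w(t)\|^2\,\mathrm{d}t$ over $w = (v_1,v_2,u_2)^\top \in \mathcal{L}^\infty([0,T],\mathbb{R}^3)$ subject to $\dot{\mathbf{x}}(t) = w(t)$, $\mathbf{x}(0) = \hat{\mathbf{x}}$, $\mathbf{x}(T) = \mathbf{0}$.

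\textbf{Step 2: Lower bound.} By absolute continuity of $\mathbf x$, every feasible $w$ satisfies $\int_0^T w(t)\,\mathrm{d}t = \mathbf{x}(T) - \mathbf{x}(0) = -\hat{\mathbf{x}}$. Applying the Cauchy--Schwarz inequality componentwise yields $\hat{x}_i^2 = \bigl(\int_0^T w_i(t)\,\mathrm{d}t\bigr)^2 \leq T \int_0^T w_i(t)^2\,\mathrm{d}t$ for $i \in \{1,2,3\}$; summing over $i$ gives $\int_0^T \|w(t)\|^2\,\mathrm{d}t \geq \|\hat{\mathbf{x}}\|^2 / T$, hence $\tilde{\tilde{V}}(\hat{\mathbf{x}}) \geq \|\hat{\mathbf{x}}\|^2/T$.

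\textbf{Step 3: Attainment and conclusion.} For the matching upper bound I would exhibit the constant control $w(t) \equiv -\hat{\mathbf{x}}/T$, i.e.\ the straight line $\mathbf{x}(t) = (1 - t/T)\,\hat{\mathbf{x}}$, which is admissible and produces the cost $\int_0^T \|\hat{\mathbf{x}}/T\|^2\,\mathrm{d}t = \|\hat{\mathbf{x}}\|^2/T$; translated back, this is $u_2 \equiv -\hat{x}_3/T$, $u_1 \equiv \|(\hat{x}_1,\hat{x}_2)^\top\|/T$, and a fixed heading $u_3$ opposite to $(\hat{x}_1,\hat{x}_2)^\top$. Combining the two bounds gives $\tilde{\tilde{V}}(\hat{\mathbf{x}}) = \|\hat{\mathbf{x}}\|^2/T$, which is manifestly positive definite. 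I do not expect a serious obstacle; the only point requiring mild care is the measurable selection of $u_3$ from $(v_1,v_2)$, which is harmless since it affects neither measurability, nor boundedness, nor the cost.
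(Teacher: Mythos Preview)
Your proof is correct and takes a genuinely different, more elementary route than the paper. The paper splits the OCP into two subproblems (one for $(x_1,x_2)$ with controls $(u_1,u_3)$, one for $x_3$ with control $u_2$) and then applies Pontryagin's maximum principle to each: it computes the Hamiltonian, shows the adjoints are constant, rules out the abnormal case $\lambda_0=0$ by a case analysis, extracts $u_3^\star$ from the stationarity conditions, and only then evaluates the optimal cost. Your change of variables $(v_1,v_2)=(\cos u_3,\sin u_3)\,u_1$ linearizes the problem in one stroke, after which the Cauchy--Schwarz lower bound plus an explicit constant minimizer finish it in a few lines. What your approach buys is brevity and a self-contained argument that avoids the PMP machinery and the normality discussion; what the paper's approach buys is perhaps a more systematic template that would generalize to settings where a clever substitution is not available. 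One minor quibble: the set $\{t:(v_1(t),v_2(t))=\mathbf 0\}$ need not be a null set, but this is harmless since $u_1=0$ there and the value of $u_3$ affects neither the dynamics nor the cost on that set.
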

\begin{proof}
	Since the OCP in consideration is decoupled, we can split it into the following two optimal control problems:
	\begin{equation}
		\boxed{\begin{aligned}
			\text{Minimize}\quad & \int_0^T u_1(t)^2\,\mathrm{d}t \quad\text{w.r.t.\ $u_i \in \mathcal{L}^\infty([0,T],\mathbb{R})$, $i \in \{1,3\}$,}\\
			& \text{subject to $x_i(T) = 0$ and $x_i(0) = \hat{x}_i$, $i \in \{1,2\}$, and} \\
			& \qquad \begin{pmatrix} \dot{x}_1(t) \\ \dot{x}_2(t) \end{pmatrix} = \begin{pmatrix}
				\cos (u_3(t)) \\ \sin (u_3(t))
			\end{pmatrix} u_1(t).
		\end{aligned}} \label{SimplifiedOCPu1u3}\tag{OCP 1}
	\end{equation}
	\begin{equation}\boxed{\begin{aligned}
		\text{Minimize}\quad & \int_0^T u_2(t)^2\,\mathrm{d}t \quad\text{w.r.t.\ $u_2 \in \mathcal{L}^\infty([0,T],\mathbb{R})$} \\
		& \text{subject to $\dot{x}_3(t) = u_2(t)$ and $x_3(0) = \hat{x}_3$, $x_3(T) = 0$}.
		\end{aligned}} \label{SimplifiedOCPu2}\tag{OCP 2}
	\end{equation}
	Firstly, we solve \eqref{SimplifiedOCPu1u3}: We assume $\min \{|\hat{x}_1|,|\hat{x}_2|\} > 0$, i.e.\ a \textit{non-zero initial condition}. %
	Otherwise the assertion holds trivially since the stage cost is bounded from below by zero. %
	The Hamiltonian is given by
	\begin{align*}
		\mathcal{H}(x_1,x_2,\lambda_1,\lambda_2,u_1,u_3) & = \lambda_0 u_1^2 + (\lambda_1 \cos (u_3) + \lambda_2 \sin (u_3)) u_1.
	\end{align*}
	Differentiation of the Hamiltonian~$\mathcal{H}$ w.r.t.\ the state variables $x_1$, $x_2$ yields %
	the adjoint equation $\dot{\mathbf{\lambda}}(t) = \mathbf{0}$, i.e.\ the adjoints $\lambda_1$ and $\lambda_2$ are constant. %
	Moreover, Pontryagin's maximum principle also yields the necessary optimality conditions
	\begin{enumerate}
		\item [(1)] %
			$\mathcal{H}_{u_1} = 0 \Longleftrightarrow \lambda_0 u_1^\star(t) = - \frac 12 \Big( \lambda_1 \cos(u_3^\star(t)) + \lambda_2 \sin(u^\star_3(t)) \Big)$
		\item [(3)] %
			$\mathcal{H}_{u_3} = 0 \Longleftrightarrow u_1^\star(t) \Big( \lambda_2 \cos(u^\star_3(t)) - \lambda_1 \sin(u^\star_3(t)) \Big) = 0$
	\end{enumerate}
	Firstly, we observe that the right hand side of the differential equation is equal to zero if $u_1^\star(t) = 0$ holds. %
	Combining this observation with the assumed non-zero initial condition and the imposed terminal constraint implies that the set
	\begin{equation}
		S := \{ t \in [0,T] : u^\star_1(t) \neq 0 \} \nonumber
	\end{equation}
	has strictly positive measure~$|S|$. 
	
	Next, we show that $\lambda_0 \neq 0$, which allows us to set $\lambda_0 := 1$ w.l.o.g.\ in the following: %
	Suppose that $\lambda_0 = 0$ holds. Moreover, let us assume that also $\lambda_1$ equals zero. %
	Then, Conditions~(1) and~(3) imply $\sin(u_3^\star(t)) = 0 = \cos(u_3^\star(t))$ in view of $\lambda_2 \neq 0$ for all $t \in S$ %
	--- a contradiction since $S$ has non-zero measure. %
	Analogously, we also get a contradiction for $\lambda_2 = 0$. Hence, we have $\lambda_1 \neq 0 \neq \lambda_2$. %
	Then, Conditions~(1) and~(3) imply $\tan(u_3^\star(t)) = - \lambda_1 / \lambda_2$ and $\tan(u_3^\star(t)) = \lambda_2 / \lambda_1$. %
	Combining these two equations yields $-\lambda_1^2 = \lambda_2^2$ --- again a contradiction.
	Thus, let $\lambda_0 = 1$ in the following.
	
	For each $t \in S \subseteq [0,T]$, Condition~(3) yields %
		$\lambda_1 \sin(u_3^\star(t)) = \lambda_2 \cos(u_3^\star(t))$, 
	which implies
	\begin{equation}
		u^\star_3(t) = \begin{cases} 
			\frac \pi 2 + k \pi \quad\text{ for some $k \in \mathbb{Z}$} & \text{for $\lambda_1 = 0$} \\
			\arctan \left( \frac {\lambda_2}{\lambda_1} \right) & \text{for $\lambda_1 \neq 0.$}
		\end{cases} \label{NotationU3}
	\end{equation}
	Then, the terminal conditions read
	\begin{align*}
		0 = x_1(T) & = \hat{x}_1 + \int_S \cos(u_3^\star(t)) u_1^\star(t)\, \mathrm{d}t, \\
		0 = x_2(T) & = \hat{x}_2 + \int_S \sin(u_3^\star(t)) u_1^\star(t)\, \mathrm{d}t,
	\end{align*}
	which can be rewritten as
	\begin{align*}
		2 \hat{x}_1 & =  \int_S \lambda_1 \cos^2(u^\star_3(t)) + \lambda_2 \cos(u_3^\star(t)) \sin(u_3^\star(t))\, \mathrm{d}t, \\
		2 \hat{x}_2 & =  \int_S \lambda_1 \cos(u_3^\star(t)) \sin(u_3^\star(t)) + \lambda_2 \sin^2(u^\star_3(t))\, \mathrm{d}t
	\end{align*}
	by using Condition~(1). Then, plugging \eqref{NotationU3} into these equations leads to
	\begin{align*}
		& \lambda_i = \frac {2\hat{x}_i}{|S|} \qquad\text{for $i \in \{1,2\}$}
	\end{align*}
	using the formulas 
	\[	
		\cos \arctan(x) = \frac 1 {\sqrt{1+x^2}} \qquad\text{and}\qquad \sin \arctan(x) = \frac x {\sqrt{1+x^2}}.
	\]
	Necessarily, this leads either to $\hat{x}_1 = 0$ for $\lambda_1 = 0$ or to a contradiction otherwise. Consequently, we get
	\begin{align*}
		u_1^\star & = - \frac 12 \left( \frac {2\hat{x}_1}{|S|} \cos \arctan \left( \frac {\hat{x}_2} {\hat{x}_1} \right) + \frac {2\hat{x}_2}{|S|} \sin \arctan \left( \frac {\hat{x}_2} {\hat{x}_1} \right) \right) = \frac {-1} {|S|} \sqrt{\hat{x}_1^2 + \hat{x}_2^2}
	\end{align*}		
	for $\lambda_1 \neq 0$ and $u_1^\star = \pm \hat{x}_2 / |S|$ for $\lambda_1 = 0$ and $\hat{x}_1 = 0$. %
	Hence, in both cases we obtain the objective value
	\begin{equation}
		\int_S u_1^\star(t)^2\, \mathrm{d}t = \frac {\hat{x}_1^2 + \hat{x}_2^2}{|S|}, \nonumber 
	\end{equation}
	which is minimal for $|S| = T$. Hence, $S = [0,T]$ holds for the optimal control, which shows that the optimal value of~\eqref{SimplifiedOCPu1u3} is $(\hat{x}_1^2 + \hat{x}_2^2)/T$.
	
	Next, we consider~\eqref{SimplifiedOCPu2}, which is a linar quadratic OCP with zero-terminal constraint. %
	Here, the Hamiltonian is $\mathcal{H}(x_3,\lambda_3,u_2) = \lambda_0 u_2^2 + \lambda_3 u_2$. %
	Again, the differentiation of~$\mathcal{H}$ w.r.t.\ the state $x_3$ yields that the adjoint~$\lambda_3$ is constant. %
	Moreover, the abnormal multiplier can be set to one (otherwise $\mathcal{H}_{u_2} = 0$ imposes also $\lambda_3 = 0$ --- a contradiction). %
	Hence, we get $\lambda_3 = -2u_2$ from the necessary optimality condition $\mathcal{H}_{u_2} = 0$. Then, the terminal constraint implies $u^\star_2(t) = - \hat{x}_3/T$ and, thus, $\int_0^T u_2^\star(t)^2\,\mathrm{d}t = \hat{x}^2_3/T$.
	
	Adding up the two computed optimal values shows the assertion.
\end{proof}

\section*{Acknowledgements}
K.~Fla{\ss}kamp thanks L.~L\"uttgens and S.~Roy for helpful discussions on the mobile robot example, in particular for the derivation of the Lie algebra representation used to derive the trim primitives. K.~Worthmann thanks F.~Ru\ss{}wurm for helpful discussions on the mobile robot example.

\bibliographystyle{abbrv}  

\bibliography{References}   
\end{document}